\newcolumntype{C}[1]{>{\centering\let\newline\\\arraybackslash\hspace{0pt}}m{#1}}
\DeclareMathOperator*{\minimize}{minimize}
\newcommand{\svx}{\mathcal{S}}
\newcommand{\cvx}{\mathcal{F}}
\newcommand{\tp}{\mathsf{T}}
\newcommand{\norm}[1]{\left\|#1\right\|}
\newcommand{\field}[1]{\mathbb{#1}}
\newcommand{\R}{\field{R}}
\newcommand{\ee}{\,\mathbb{E} }
\newcommand{\bmtx}{\begin{bmatrix}}
\newcommand{\emtx}{\end{bmatrix}}
\newcommand{\bsmtx}{\left[ \begin{smallmatrix}} 
\newcommand{\esmtx}{\end{smallmatrix} \right]} 
\newcommand{\bmatarray}[1]{\left[\begin{array}{#1}}
\newcommand{\ematarray}{\end{array}\right]}
\newtheorem{remark}{Remark}
\newtheorem{lemma}{Lemma}
\newtheorem{theorem}{Theorem}
\newtheorem{corollary}{Corollary}
\newtheorem{definition}{Definition}
\newtheorem{proposition}{Proposition}
\newcommand*{\rom}[1]{\expandafter\@slowromancap\romannumeral #1@}
\newcommand\EnumPrefix{}
\newlist{senenum}{enumerate}{10}
\setlist[senenum]{label=\arabic*.,ref=\EnumPrefix,leftmargin=*}
\numberwithin{equation}{section}
\newcommand{\remove}[1]{}
\title{Analysis of  Biased Stochastic Gradient Descent Using\\ Sequential Semidefinite Programs}
\author{
  Bin Hu\thanks{B.~Hu is with the Coordinated Science Laboratory and the Department of Electrical and Computer Engineering, University of Illinois at Urbana--Champaign, Email: binhu7@illinois.edu} \and
Peter Seiler\thanks{P. Seiler is with the Department of Electrical Engineering and Computer Science, University of Michigan, Ann Arbor,  Email: pseiler@umich.edu} \and
  Laurent Lessard\thanks{ L.~Lessard is with the Wisconsin Institute for Discovery and the Department of Electrical and Computer Engineering, University of Wisconsin--Madison, Email: laurent.lessard@wisc.edu} 
}
\date{}
\begin{document}

\maketitle

\begin{abstract}
We present a convergence rate analysis for biased stochastic gradient descent (SGD), where individual gradient updates are corrupted by computation errors.
We develop stochastic quadratic constraints to formulate a small linear matrix inequality (LMI) whose feasible points lead to  convergence bounds of biased SGD. Based on this LMI condition, we develop a sequential minimization approach to analyze the intricate trade-offs that couple stepsize selection, convergence rate, optimization accuracy, and robustness to gradient inaccuracy. We also provide feasible points for this LMI and obtain theoretical formulas that quantify the convergence properties of biased SGD under various assumptions on the loss functions.
\end{abstract}

\section{Introduction}
Empirical risk minimization (ERM) is a prevalent topic in machine learning research~\cite{bubeck2015, teo2007}. Ridge regression, $\ell_2$-regularized logistic regression,  and support vector machines (SVM) can all be formulated as the following ERM problem
\begin{align}\label{eq:opt}
\min_{x\in \R^p} g(x)= \frac{1}{n}\sum_{i=1}^n f_i(x),
\end{align}
where $g:\R^p\rightarrow \R$ is the objective function. Stochastic gradient descent (SGD) \cite{bottou2010, Bottou2004, robbins1951} has been widely used for ERM to exploit redundancy in the training data. The SGD method applies the update rule
\begin{align}
\label{eq:SGD}
x_{k+1}=x_k-\alpha_k u_k,
\end{align}
where $u_k = \nabla f_{i_k}(x_k)$ and the index $i_k$ is uniformly sampled from $\{1, 2, \ldots, n\}$ in an independent and identically distributed (IID) manner. 
The convergence properties of SGD are well understood.   Under strong convexity of $g$ and smoothness of $f_i$, SGD with a diminishing stepsize converges sublinearly, while SGD with a constant stepsize converges linearly to a ball around the optimal solution \cite{feyzmahdavian2014, Nedic2001, moulines2011, needell2014}.  In the latter case, epochs can be used to balance convergence rate and optimization accuracy. Some recently-developed stochastic methods such as SAG~\cite{Roux2012, Schmidt2013}, SAGA~\cite{defazio2014}, Finito~\cite{defazio2014finito}, SDCA~\cite{shalev2013}, and  SVRG~\cite{johnson2013} converge linearly with low iteration cost when applied to~\eqref{eq:opt}, though SGD is still popular because of its simple iteration form, low memory footprint, and nice generalization property. SGD is also commonly used as an initialization for other algorithms~\cite{Roux2012, Schmidt2013}.

In this paper, we present a general analysis for \textit{biased SGD}. This is a version of SGD where the gradient updates $\nabla f_{i_k}(x_k)$ are corrupted by additive as well as multiplicative noise. In practice, such errors can be introduced by sources such as: inaccurate numerical solvers, digital round-off errors, quantization, or sparsification. The biased SGD update equation is given by
\begin{align}
\label{eq:SGerror}
x_{k+1}=x_k-\alpha_k (u_k+ e_k).
\end{align}
Here, $u_k = \nabla f_{i_k}(x_k)$ is the individual gradient update and $e_k$ is an error term. We consider the following error model, which unifies the error models in \cite{berts2002}:
\begin{align}\label{eq:def_ykhk0}
\|e_k\|^2 \le \delta^2 \| u_k \|^2+c^2,
\end{align}
where $\delta \ge 0$ and $c\ge 0$ bound the relative error and the absolute error in the oracle computation, respectively. If $\delta=c=0$, then $e_k=0$ and we recover the standard SGD setup. 
The model \eqref{eq:def_ykhk0} unifies the error models in \cite{berts2002} since:
\begin{enumerate}
\item If $c=0$, then~\eqref{eq:def_ykhk0} reduces to a relative error model, i.e. 
\begin{align}\label{eq:def_ykhk}
\|e_k\| \le \delta \| u_k \|
\end{align}
\item If $\delta=0$, then $e_k$  is a bounded absolute error, i.e. 
\begin{align}\label{eq:def_ykhk2}
\|e_k\| \le c
\end{align}
\end{enumerate}
We assume that both $\delta$ and $c$ are known in advance. We make no assumptions about how $e_k$ is generated, just that it satisfies~\eqref{eq:def_ykhk0}. Thus, we will seek a worst-case bound that holds regardless of whether $e_k$ is random, set in advance, or chosen adversarially.

Suppose the cost function $g$ admits a unique minimizer $x_\star$.
For standard SGD (without computation error), $u_k$ is an unbiased estimator of $\nabla g(x_k)$. Hence under many circumstances,  one can control the final optimization error $\|x_k-x_\star\|$ by decreasing the stepsize $\alpha_k$. Specifically, suppose $g$ is $m$-strongly convex.  Under various assumptions on $f_i$, one can prove the following typical bound for standard SGD with a constant stepsize $\alpha$ \cite{moulines2011,
  needell2014, bottou2018optimization}:
\begin{align}
\label{eq:constMainBound}
\ee \|x_k-x_\star\|^2\le \rho^{2k}\mathbb{E}\|x_0-x_\star\|^2+H_\star
\end{align}
where $\rho^2=1-2m\alpha+O(\alpha^2)$ and
$H_\star=O(\alpha)$. By decreasing stepsize $\alpha$, one can control the final optimization error $H_\star$ at the price of slowing down the convergence rate $\rho$. 
The convergence behavior of biased SGD is different. Since the error term $e_k$ can be chosen adversarially, the sum $(u_k+e_k)$ may no longer be an unbiased estimator of $\nabla g(x_k)$. The error term $e_k$ may introduce a bias which cannot be overcome by decreasing stepsize $\alpha$. Hence the final optimization error in biased SGD heavily depends on the error model of~$e_k$.
 In this paper, we quantify the convergence properties of  biased SGD \eqref{eq:SGerror} with the error model \eqref{eq:def_ykhk0} using worst-case analysis.

\paragraph{Main contribution.}
The main novelty of this paper is that our analysis simultaneously addresses  the relative error and the absolute error in the gradient computation.
 We formulate a linear matrix inequality (LMI) that directly leads to convergence bounds of biased SGD and couples the relationship between $\delta$, $c$, $\alpha_k$ and the assumptions on $f_i$. This convex program can be solved both numerically and analytically to obtain various convergence bounds for biased SGD. Based on this LMI, we develop a sequential minimization approach that can analyze biased SGD with an arbitrary time-varying stepsize. We also obtain analytical rate bounds in the form of \eqref{eq:constMainBound} for biased SGD with constant stepsize.  However, our bound requires $\rho^2=1-\frac{m^2-\delta^2 \tilde M}{m}\alpha+O(\alpha^2)$~\footnote{When $\delta=c=0$, this rate bound does not reduce to  $\rho^2=1-2m\alpha+O(\alpha^2)$. This is due to the inherent differences between the analyses of biased SGD and the standard SGD. See Remark~\ref{rem:differentiability} for a detailed explanation.} and $H_\star=\frac{c^2+2\delta^2 G^2}{m^2-\delta^2\tilde M}+O(\alpha)$ where $\tilde M$ and $G^2$ are some prescribed constants determined by the assumptions on $f_i$.
Based on this result, there is no way to shrink $H_\star$ to $0$. This is consistent with our intuition since the gradient estimator as well as the final optimization result can be biased. We show that this ``uncontrollable" biased optimization error is $\frac{c^2+2\delta^2 G^2}{m^2-\delta^2\tilde M}$. 
The resultant analytical rate bounds highlight the design trade-offs for biased SGD.


The work in this paper complements the ongoing research on stochastic optimization methods, which mainly focuses on the case where the oracle computation is exact.  The stepsize selection in biased SGD must address the trade-offs between speed, accuracy, and inexactness in the oracle computations.  Our analysis brings new theoretical insights for understanding such trade-offs  in the presence of biased gradient computation.  It is also worth mentioning that the robustness of full gradient methods with respect to gradient inexactness has been extensively studied \cite{d2008smooth, schmidt2011, devolder2014first}.  However, addressing a unified error model that combines the absolute error and the relative error is still non-trivial. Our analysis complements the existing results in  \cite{d2008smooth, schmidt2011, devolder2014first} by providing a unified treatment of  the error model \eqref{eq:def_ykhk0}.
Notice that it is important to include the relative error model in the analysis since it  covers the numerical round-off error as a special case.  If one treats the round-off error as an absolute error with time-varying $c_k$, then the specific value of $c_k$ will depend on the state $x_k$ and can not be fixed beforehand. In contrast, if one models the round-off error as a relative error, the value $\delta$ can be fixed as a constant beforehand.

The approach taken in this paper can be viewed as a stochastic extension of the work in \cite{Lessard2014, nishihara2015} that analyzes the linear convergence rates of deterministic optimization methods (gradient descent, Nesterov's method, ADMM, etc.) using quadratic constraints and semidefinite programs.  Notice that the analysis for (deterministic) biased gradient descent in \cite{Lessard2014} is numerical. In this paper, we derive analytical formulas quantifying the convergence properties of the biased SGD. 
It is worth mentioning that one can combine jump system theory with quadratic constraints
to analyze SAGA, Finito, and SDCA in a unified manner \cite{hu17b}. However, the analysis in \cite{hu17b} does not directly address the trade-offs between the convergence speed $\rho^2$ and the optimization error $H_\star$, and cannot be easily tailored for biased SGD.
Another related line of work that uses semidefinite programs to analyze optimization methods is built upon the idea of formulating worst-case analysis as the so-called performance estimation problem (PEP) \cite{drori2014, taylor2017,taylor2017exact}. It is recognized that there is a fundamental connection between the quadratic constraint approach and the PEP framework \cite{taylor18a}. 
Recently, the PEP framework in \cite{drori2014, taylor2017,taylor2017exact} has been extended for the stochastic setup \cite{taylor19a}. In addition, it is known that the PEP approach can also be applied to study the bias in the (deterministic) gradient descent method \cite{de2017worst}.
 It is possible to extend the results in \cite{de2017worst, taylor19a} for a PEP-based analysis of biased SGD. This is an interesting topic for future research.

The rest of the paper is organized as follows.  In Section \ref{sec:generalAnalysis}, we formulate LMI testing conditions for  convergence analysis of biased SGD. The resultant LMIs are then solved sequentially, yielding recursive convergence bounds for biased SGD. In Section \ref{sec:fixed_alpha}, we simplify the analytical solutions of  the resultant sequential LMIs and derive analytical rate bounds in the form of \eqref{eq:constMainBound} for biased SGD with a constant stepsize.  Our results highlight various design trade-offs for biased SGD. Finally, we show how existing results on standard SGD (without gradient computation error) can be recovered using our proposed LMI approach, and discuss how a time-varying stepsize can potentially impact the convergence behaviors of biased SGD (Section \ref{sec:further}).

\subsection{Notation}

The $p\times p$ identity matrix and the $p \times p$ zero matrix are denoted as $I_p$ and $0_p$, respectively. The subscript $p$ is occasionally omitted when the dimensions are clear by the context.  When a
matrix $P$ is negative semidefinite, we will use the notation $P \preceq 0$.
The Kronecker product of two matrices $A$ and $B$ is denoted $A \otimes B$.

\begin{definition}[Smooth functions]\label{def:smooth}
A differentiable function  $f:\R^p \to \R$ is $L$-smooth for some $L>0$ if the following inequality is satisfied:
\[
\norm{\nabla f(x) - \nabla f(y)} \le L\norm{x-y}
\qquad\text{for all }x,y\in\R^p.
\] 
\end{definition}

\begin{definition}[Convex functions]\label{def:cvx}
Let $\cvx(m,L)$ for $0 \le m \le L \le \infty$ denote the set of differentiable functions $f:\R^p \to \R$ satisfying the following inequality for all $x,y\in\R^p$.
\begin{align}
\label{eq:gradient3}
\bmtx x-y \\ \nabla f(x)-\nabla f(y) \emtx^\tp \bmtx -2m I_p & (1+\tfrac{m}{L}) I_p \\  (1+\tfrac{m}{L}) I_p & -\tfrac{2}{L} I_p\emtx  
\bmtx x-y \\ \nabla f(x)-\nabla f(y)\emtx\ge 0.
\end{align}
\end{definition}

\noindent
Note that $\cvx(0,\infty)$ is the set of all convex functions, $\cvx(0,L)$ is the set of all convex $L$-smooth functions, $\cvx(m,\infty)$ with $m>0$ is the set of all $m$-strongly convex functions, and $\cvx(m,L)$ with $m>0$ is the set of all $m$-strongly convex and $L$-smooth functions. If $f \in \mathcal{F}(m,L)$ with $m>0$, then $f$ has a unique global minimizer.

\begin{definition}\label{def:svx}
Let $\svx(m,L)$ for $0 \le m \le L \le \infty$ denote the set of differentiable functions $g:\R^p \to \R$ having some global minimizer $x_\star \in \R^p$ and satisfying the following inequality for all $x,y\in\R^p$.
\begin{align}
\label{eq:gradientNL}
\bmtx x-x_\star \\ \nabla g(x)\emtx^\tp \bmtx -2m I_p & (1+\tfrac{m}{L}) I_p \\  (1+\tfrac{m}{L}) I_p & -\tfrac{2}{L} I_p\emtx  
\bmtx x-x_\star \\ \nabla g(x)\emtx\ge 0.
\end{align}
\end{definition}

\noindent
If $g \in \svx(m,L)$ with $m>0$, then $x_\star$ is also the unique stationary point of $g$. It is worth noting that $\cvx(m,L) \subset \svx(m,L)$. In general, a function $g\in\svx(m,L)$ may not be convex. If $g\in \mathcal{S}(m, \infty)$, then $g$ may not be smooth. The condition \eqref{eq:gradientNL} is similar to the notion of \textit{one-point convexity}~\cite{arora2015, chen2015, sun2016guaranteed} and \textit{star-convexity}~\cite{lee2016optimizing}.

\subsection{Assumptions}

Referring to the problem setup~\eqref{eq:opt}, we will adopt the general assumption that $g\in\svx(m,\infty)$ with $m > 0$. So in general, $g$ may not be convex. We will analyze four different cases, characterized by different assumptions on individual $f_i$:
(\rom{1}) Bounded shifted gradients:\footnote{This case is a variant of the common assumption  $\frac{1}{n}\sum_{i=1}^n\norm{\nabla f_i(x)}^2 \le \beta$. One can check that this case holds for several $\ell_2$-regularized problems including SVM and logistic regression.} $\norm{\nabla f_i(x) - mx} \le \beta$ for all $x\in\R^p$; (\rom{2}) $f_i$ is $L$-smooth; (\rom{3})
 $f_i\in\cvx(0,L)$; 
(\rom{4}) $f_i\in\cvx(m,L)$. 

Assumption \rom{1}  is a natural assumption for SVM\footnote{The loss functions for SVM are non-smooth, and $u_k$ is actually updated using the subgradient information. For simplicity, we abuse our notation and use $\nabla f_i$ to denote the subgradient of $f_i$ for SVM problems.} and logistic regression while Assumptions \rom{2}, \rom{3}, or \rom{4} can be used for ridge regression, logistic regression, and smooth SVM. The $m$ assumed in cases~\rom{1} and \rom{4} is the same as the $m$ used in the assumption on $g\in\mathcal{S}(m,\infty)$.

\section{Analysis framework}
\label{sec:generalAnalysis}

\subsection{An LMI condition for the analysis of biased SGD}
 To analyze the convergence properties of biased SGD, we present a small linear matrix inequality (LMI) whose feasible points directly lead to convergence bounds of the biased SGD \eqref{eq:SGerror}
with the error model \eqref{eq:def_ykhk0}.

\begin{table}[!h]
\centering
\begin{tabular}{c|p{50mm}|l|l}\hline\rule{0pt}{2.6ex}%
Case & Desired assumption on the $f_i$ & Value of $M$ & Value of $G^2$ \\\hline\rule{0pt}{6.2mm}%
\rom{1} & $(f_i(x)-\frac{m}{2}\|x\|^2)$ have  bounded gradients; $\|\nabla f_i(x)-mx\|\le \beta$. & $\bmtx -m^2 & m \\ m & -1 \emtx$ & $\beta^2 + m^2\norm{x_\star}^2$\\[3mm]
\rom{2} & The $f_i$ are $L$-smooth, but are not necessarily convex. & $\bmtx 2L^2 & 0 \\ 0 & -1 \emtx$ & $\frac{1}{n}\sum_{i=1}^n \|\nabla f_i(x_\star)\|^2$\\[3mm]
\rom{3} & The $f_i$ are convex and \mbox{$L$-smooth}; $f_i \in \mathcal{F}(0,L)$. & $\bmtx 0 & L \\ L & -1 \emtx$ & $\frac{1}{n}\sum_{i=1}^n \|\nabla f_i(x_\star)\|^2$\\[3mm]
\rom{4} & The $f_i$ are $m$-strongly convex and $L$-smooth; $f_i \in \mathcal{F}(m,L)$ & $\bmtx -2mL & L+m \\ L+m & -1 \emtx$ & $ \frac{1}{n}\sum_{i=1}^n \|\nabla f_i(x_\star)\|^2$\\[3mm]
\hline
\end{tabular}
\caption{Given that $g\in \mathcal{S}(m, \infty)$, this table shows different possible assumptions about the $f_i$ and their corresponding values of $M$ and $G^2$ that will be used for our analysis.
\label{tab:Mmat}}
\end{table}

\begin{theorem}[Main Theorem]\label{thm:main}
Consider biased SGD~\eqref{eq:SGerror} with $g\in\svx(m,\infty)$ for some $m>0$, and let $x_\star$ be the unique global minimizer of $g$. Given one of the four conditions on $f_i$ and the corresponding $M=\bsmtx M_{11} & M_{12}\\ M_{21} & M_{22}\esmtx$ and $G$ from Table \ref{tab:Mmat},
if the following holds for some choice of nonnegative $\lambda_k, \nu_k, \mu_k$,$\rho_k$,
\begin{align}\label{eq:LMI_big}
\bmtx -\rho_k^2 - 2\nu_k m + \lambda_k M_{11}  & \nu_k + \lambda_k M_{12} & -1 & 0 \\
\nu_k +\lambda_k M_{21} & \mu_k \delta^2 +\lambda_k M_{22} & \alpha_k & 0 \\
-1 & \alpha_k &  -1 & \alpha_k \\
0 & 0 & \alpha_k & -\mu_k \emtx
\preceq 0 
\end{align}
where the inequality is taken in the \emph{semidefinite} sense, then the biased SGD iterates satisfy
\begin{align}\label{eq:bound}
\ee \|x_{k+1}-x_\star\|^2 \le \rho_k^{2}\ee \|x_k-x_\star\|^2  + (2\lambda_k G^2+\mu_k c^2)
\end{align}
\end{theorem}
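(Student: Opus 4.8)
The plan is to obtain Theorem~\ref{thm:main} directly from Lemmas~\ref{lem:SQCnoise} and~\ref{lem:SQClemma} by recognizing the $4\times4$ inequality~\eqref{eq:LMI_big} as a Schur-complement reformulation of the $3\times3$ LMI~\eqref{eq:SGlmi3}. First I would assemble the quadratic-constraint data: Lemma~\ref{lem:SQClemma} provides the three inequalities~\eqref{eq:SQC2}, \eqref{eq:SQC3}, \eqref{eq:SQC1}, and~\eqref{eq:XjPij} lifts them into the canonical form~\eqref{eq:SGSQCnoise} with triples $(X^{(1)},\gamma^{(1)})$, $(X^{(2)},\gamma^{(2)})$, $(X^{(3)},\gamma^{(3)})$. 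I would then set the multipliers $z_k^{(1)}=\nu_k$, $z_k^{(2)}=\lambda_k$, $z_k^{(3)}=\mu_k$, all nonnegative by hypothesis (as is $\rho_k$). With these choices the residual term in~\eqref{eq:keyineq1} is $\sum_j z_k^{(j)}\gamma^{(j)} = \nu_k\cdot 0 + \lambda_k\cdot 2G^2 + \mu_k\cdot c^2 = 2\lambda_k G^2 + \mu_k c^2$, so Lemma~\ref{lem:SQCnoise} yields precisely~\eqref{eq:bound}, provided its hypothesis~\eqref{eq:SGlmi3} holds with these substitutions. Everything thus reduces to the implication $\eqref{eq:LMI_big}\Rightarrow\eqref{eq:SGlmi3}$.

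To prove that implication I would isolate the purely quadratic block of~\eqref{eq:SGlmi3}: writing $v_k := \bmtx 1 & -\alpha_k & -\alpha_k\emtx^\tp$, one has
\[
\bmtx 1 & -\alpha_k & -\alpha_k \\ -\alpha_k & \alpha_k^2 & \alpha_k^2 \\ -\alpha_k & \alpha_k^2 & \alpha_k^2 \emtx = v_k v_k^\tp ,
\]
so, after substituting the multipliers above and reading $M=\bsmtx M_{11}&M_{12}\\ M_{21}&M_{22}\esmtx$ from Table~\ref{tab:Mmat}, the left-hand side of~\eqref{eq:SGlmi3} equals $Q_k + v_k v_k^\tp$ with
\[
Q_k := \bmtx -\rho_k^2 - 2\nu_k m + \lambda_k M_{11} & \nu_k + \lambda_k M_{12} & 0 \\ \nu_k + \lambda_k M_{21} & \mu_k\delta^2 + \lambda_k M_{22} & 0 \\ 0 & 0 & -\mu_k \emtx .
\]
By the Schur complement lemma applied with the scalar pivot $-1 < 0$, the inequality $Q_k + v_k v_k^\tp \preceq 0$ is equivalent to $\bmtx Q_k & v_k \\ v_k^\tp & -1 \emtx \preceq 0$. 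It then remains to check that this $4\times4$ matrix is carried onto~\eqref{eq:LMI_big} by a congruence transformation, namely permuting the third and fourth coordinates and then flipping the sign of the new third coordinate (equivalently, conjugating by $P^\tp\,\mathrm{diag}(1,1,-1,1)$ with $P$ the $(3\,4)$ transposition). Since congruences preserve definiteness, $\eqref{eq:LMI_big}\preceq 0$ is equivalent to $Q_k + v_k v_k^\tp\preceq 0$, i.e.\ to~\eqref{eq:SGlmi3}, and Lemma~\ref{lem:SQCnoise} delivers the conclusion.

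The only point requiring care is this coordinate bookkeeping: one must verify that the off-diagonal entries $-1$ (position $(1,3)$), $\alpha_k$ (positions $(2,3)$ and $(3,4)$) and the diagonal entries $-1$ and $-\mu_k$ of~\eqref{eq:LMI_big} are exactly what the $v_k$-column and the $-1$ pivot produce once the coordinates have been permuted and one row/column has been negated. Beyond this matrix identity there is nothing further to do: all expectations were already handled inside Lemma~\ref{lem:SQCnoise}, so the remainder of the argument is a purely algebraic substitution with no new probabilistic step.
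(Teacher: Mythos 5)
Your proposal is correct and follows essentially the same route as the paper: both reduce \eqref{eq:LMI_big} to the $3\times3$ LMI \eqref{eq:SGlmi3} via a Schur complement on the $-1$ pivot, set $z_k^{(1)}=\nu_k$, $z_k^{(2)}=\lambda_k$, $z_k^{(3)}=\mu_k$ with the constraints \eqref{eq:XjPij} from Lemma~\ref{lem:SQClemma}, and invoke Lemma~\ref{lem:SQCnoise}. The only difference is cosmetic: the paper takes the Schur complement of \eqref{eq:LMI_big} directly with respect to its $(3,3)$ entry, whereas you rebuild the $4\times4$ matrix from $Q_k+v_kv_k^\tp$ and match it to \eqref{eq:LMI_big} by a permutation-and-sign congruence, which checks out entry by entry.
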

\begin{proof}
The proof is based on extending the quadratic constraint approach in \cite{Lessard2014} to the stochastic case. Specifically, one can show that for each of the four conditions on $f_i$ and the corresponding $M$ and $G$ in Table \ref{tab:Mmat}, the following quadratic constraint holds.
\begin{align}
\label{eq:SQC3}
\ee\bmtx x_k-x_\star \\ u_k \emtx^\tp (M\otimes I_p)  \bmtx x_k-x_\star \\ u_k \emtx \ge -2G^2.
\end{align}
Then one can use  some standard arguments from the controls literature  to prove the statement in this theorem. A detailed proof is presented in the appendix.
\end{proof}

\begin{remark}
Under mild technical assumptions, the result in Theorem 1 can be extended for the problem in the more general form of $\min_x\{\mathbb{E}f_i(x)\}$, since its proof does not depend on the cardinality of the index set that $i$ is sampled from. For simplicity, our paper focuses on the finite sum setup.
\end{remark}

Notice \eqref{eq:bound} can be used to prove various types of convergence results. We will briefly discuss this in Remark \ref{rem:remark1} and provide more details in later sections.
 For a fixed $\delta$, the matrix in~\eqref{eq:LMI_big} is linear in $(\rho_k^2,\nu_k,\mu_k,\lambda_k,\alpha_k)$,  so~\eqref{eq:LMI_big} is a \emph{linear matrix inequality} (LMI) whose feasible set is convex and can be efficiently searched using standard semidefinite program solvers.
For example, one can implement the LMIs using CVX, a package for specifying and solving convex programs~\cite{cvx2,cvx1}. Since the matrix in \eqref{eq:LMI_big} is even linear in $\alpha_k$,   so the LMI \eqref{eq:LMI_big} can be used to study the impacts of adaptive stepsize rules on the performance of biased SGD from a theoretical viewpoint.
One may also obtain analytical formulas for certain feasibility points of the LMI~\eqref{eq:LMI_big} due to its simple form.
Our analytical bounds for biased SGD are based on the following result.

\begin{corollary}
\label{thm:cor1}
Choose one of the four conditions on $f_i$ and the corresponding $M=\bsmtx M_{11} & M_{12}\\ M_{21} & M_{22}\esmtx$ and $G$ from Table \ref{tab:Mmat}. Also define $\tilde M = M_{11} + 2mM_{12}$. Consider biased SGD~\eqref{eq:SGerror} with $g\in\svx(m,\infty)$ for some $m>0$, and let $x_\star$ be the unique global minimizer of $g$. Suppose the stepsize satisfies the bound $0<M_{21} \alpha_k\le 1$ \footnote{Ensuring such a condition in practice can be challenging for many cases since it heavily relies on the estimations of problem parameters.}(which is equivalent to the following upper bound on $\alpha_k$ for the four cases being considered in this paper).  
\begin{table}[!h]
	\centering
	\begin{tabular}{c|c|c|c|c} \hline\rule{0pt}{2.6ex}
		Case  & \rom{1} & \rom{2} & \rom{3} & \rom{4} \\\hline\rule{0pt}{2.6ex}
		$M_{21}$  & $m$ & $0$ & $L$ & $L+m$  \\\rule{0pt}{2.6ex}
		$\tilde M = M_{11} + 2mM_{12}$  & $m^2$ & $2L^2$ & $2mL$ & $2m^2$ \\\rule{0pt}{2.6ex}
		$\alpha_k$ bound & $\frac{1}{m}$ & $\infty$ & $\frac{1}{L}$ & $\frac{1}{L+m}$ \\\hline
	\end{tabular}
\end{table}

Then biased SGD \eqref{eq:SGerror} with the error model~\eqref{eq:def_ykhk0} satisfies the bound \eqref{eq:bound} with the following nonnegative parameters
\begin{subequations}\label{eq:mainFor}
\begin{align}
\mu_k&=\alpha_k^2(1+\zeta_k^{-1})\\
\lambda_k&=\alpha_k^2(1+\zeta_k)(1+\delta^2\zeta_k^{-1})\\
\rho_k^2&=(1+\zeta_k)(1-2m\alpha_k+\tilde M\alpha_k^2 (1+\delta^2\zeta_k^{-1}))\label{bad}
\end{align}
\end{subequations}
where $\zeta_k$ is a parameter that satisfies $\zeta_k>0$ and $\zeta_k \ge \frac{\alpha_k M_{21}\delta^2}{1-\alpha_k M_{21}}$. Each choice of~$\zeta_k$ yields a different bound in~\eqref{eq:bound}.
\end{corollary}
\begin{proof}
We further define 
\begin{align}
\label{eq:mainFor1}
\nu_k=\alpha_k (1+\zeta_k)(1-\alpha_k M_{21}(1+\delta^2 \zeta_k^{-1}))
\end{align}
We will show that \eqref{eq:mainFor} and \eqref{eq:mainFor1} are a feasible solution for \eqref{eq:LMI_big}.
We begin with~\eqref{eq:LMI_big} and take the Schur complement with respect to the $(3,3)$ entry of the matrix, leading to
\begin{align}\label{eq:threebythree}
\bmtx 1-\rho_k^2 - 2\nu_k m + \lambda_k M_{11}   & \nu_k + \lambda_k M_{12} - \alpha_k             & -\alpha_k \\
\nu_k + \lambda_k M_{21} - \alpha_k              & \mu_k \delta^2 + \lambda_k M_{22} + \alpha_k^2  & \alpha_k^2 \\
-\alpha_k                                      & \alpha_k^2                                    & \alpha_k^2-\mu_k \emtx \preceq 0
\end{align}
Examining the $(3,3)$ entry, we deduce that $\mu_k > \alpha_k^2$, for if we had equality instead, the rest of the third row and column would be zero, forcing $\alpha_k=0$. Substituting $\mu_k = \alpha_k^2(1 + \zeta_k^{-1})$ for some $\zeta_k > 0$ and taking the Schur complement with respect to the $(3,3)$ entry, we see~\eqref{eq:threebythree} is equivalent to
\begin{align}\label{eq:twobytwotwo}
\bmtx 1-\rho_k^2 - 2\nu_k m + \lambda_k M_{11} + \zeta_k & \nu_k + \lambda_k M_{12} - \alpha_k(1+\zeta_k)\\
\nu_k + \lambda_k M_{21} - \alpha_k(1+\zeta_k) & \lambda_k M_{22} + \alpha_k^2(1+\zeta_k)(1+\delta^2 \zeta_k^{-1}) \emtx \preceq 0
\end{align}
In~\eqref{eq:twobytwotwo}, $\zeta_k > 0$ is a parameter that we are free to choose, and each choice yields a different set of feasible tuples $(\rho_k^2,\lambda_k, \mu_k, \nu_k)$. One way to obtain a feasible tuple is to set the left side of \eqref{eq:twobytwotwo} equal to the zero matrix. This shows \eqref{eq:LMI_big} is feasible with the following parameter choices.
\begin{subequations}
\label{eq:mainFor3}
\begin{align}
\mu_k&=\alpha_k^2(1+\zeta_k^{-1})\\
\lambda_k&=-\alpha_k^2(1+\zeta_k)(1+\delta^2 \zeta_k^{-1})M_{22}^{-1}\\
\nu_k&=\alpha_k(1+\zeta_k)-\lambda_k M_{21}\\
\rho_k^2&=1-2\nu_k m+\lambda_k M_{11} +\zeta_k
\end{align}
\end{subequations}
Since we always have $M_{22}=-1$ in Table~\ref{tab:Mmat}, it is straightforward to verify that \eqref{eq:mainFor3} is equivalent to \eqref{eq:mainFor} and \eqref{eq:mainFor1}. Notice that we directly have $\mu_k\ge 0$ and $\lambda_k\ge 0$ because $\zeta_k>0$.
In order to ensure $\rho_k^2 \ge 0$ and $\nu_k\ge 0$, we must have
$1-2m\alpha_k+\tilde M\alpha_k^2 (1+\delta^2\zeta_k^{-1}) \ge 0$
and $\alpha_k M_{21} (1+\delta^2\zeta_k^{-1})\le 1$, respectively.  The first inequality always holds because $\tilde M \ge m^2$ and we have $1-2m\alpha_k+\tilde M\alpha_k^2(1+\delta^2\zeta_k^{-1}) \ge 1-2m\alpha_k+m^2\alpha_k^2 \ge (1-m\alpha_k)^2 \ge 0$. Based on the conditions $0 \le \alpha_k M_{21} < 1$ and $\zeta_k \ge \frac{\alpha_k M_{21}\delta^2}{1-\alpha_k M_{21}}$, we conclude that the second inequality always holds as well. Since we have constructed a feasible solution to the LMI~\eqref{eq:LMI_big}, the bound~\eqref{eq:bound} follows from Theorem \ref{thm:main}.
\end{proof}

Given $\alpha_k$, Corollary \ref{thm:cor1} provides a one-dimensional family of solutions to the LMI \eqref{eq:LMI_big}. 
These solutions are given by \eqref{eq:mainFor} and \eqref{eq:mainFor1} and are parameterized by the auxiliary variable $\zeta_k$.
 Corollary \ref{thm:cor1} does not require $\rho_k\le 1$. Hence it actually does not impose any upper bound on $\alpha_k$ in Case \rom{2}. Later we will impose refined upper bounds on $\alpha_k$ such that the bound \eqref{eq:bound} can be transformed into a useful bound in the form of \eqref{eq:constMainBound}. We also want to mention that the stepsize bounds in the above corollary are consistent with the existing results in the machine learning literature. For example, for Case \rom{3}, the stepsize bound for the standard SGD method is known to be $1/L$ (see Theorem 2.1 in \cite{needell2014}).

\begin{remark}
\label{rem:remark1}
We can use \eqref{eq:bound} to obtain  various types of convergence results.
 For example, when a constant stepsize is used, i.e. $\alpha_k=\alpha$ for all $k$, a naive analysis can be performed by setting $\zeta_k=\zeta$ for all $k$. In this case,
$(\rho_k, \nu_k, \mu_k, \lambda_k)$ are set to be constants $(\rho, \nu, \mu, \lambda)$.
Then, \eqref{eq:LMI_big} and \eqref{eq:bound} become independent of $k$. We can rewrite \eqref{eq:bound} as
\begin{align}
\label{eq:keyineq2}
\ee \|x_{k+1}-x_\star\|^2\le \rho^2\ee  \|x_k-x_\star\|^2 +   (2\lambda G^2+\mu c^2).
\end{align}
If $\rho < 1$, then we may recurse \eqref{eq:keyineq2} to obtain the following convergence result:
\begin{align}
\label{eq:keyineq3}
\ee \|x_{k}-x_\star\|^2&\le \rho^{2k} \ee \|x_0-x_\star\|^2 + \left(\sum_{i=0}^{k-1}\rho^{2i}\right) \left( 2\lambda G^2+\mu c^2\right)\notag\\
&\le \rho^{2k} \ee \|x_0-x_\star\|^2+\frac{2\lambda G^2+\mu c^2}{1-\rho^2}.
\end{align}
The inequality~\eqref{eq:keyineq3} is an error bound of the familiar form~\eqref{eq:constMainBound}. Nevertheless, this bound may be conservative even in the constant stepsize case. To minimize the right-hand side of \eqref{eq:bound},  the objective function for the semidefinite program \eqref{eq:LMI_big} at step $k$ should be chosen as $\rho_k^{2}\ee \|x_k-x_\star\|^2  + (2\lambda_k G^2+\mu_k c^2)$. Consequently, setting $\zeta_k$ to be a constant may introduce conservatism even in the constant stepsize case. To overcome this issue, we will introduce a sequential minimization approach next.
\end{remark}

\subsection{Sequential minimization approach for biased SGD}
\label{sec:sequential}
We will quantify the convergence behaviors of biased SGD by providing upper bounds for $\ee \|x_k-x_\star\|^2$. To do so, we will recursively make use of the bound~\eqref{eq:bound}. Suppose $\delta$, $c$, and $G$ are constant. Define $\mathcal{T}_k \subseteq \R_+^4$ to be the set of tuples $(\rho_k,\lambda_k,\mu_k,\nu_k)$ that are feasible points for the LMI~\eqref{eq:LMI_big}.
Also define the real number sequence $\{U_k\}_{k\ge 0}$ via the recursion:
\begin{align}\label{eq:recursion}
U_0 \ge \ee \norm{x_0-x_\star}^2
\qquad\text{and}\qquad
U_{k+1} = \rho_k^2 U_k + 2\lambda_k G^2 + \mu_k c^2
\end{align}
where $(\rho_k,\lambda_k,\mu_k,\nu_k) \in \mathcal{T}_k$. By induction, we can show that $U_k$ provides an upper bound for the error at timestep $k$. Indeed, if $\ee \norm{x_k-x_\star}^2 \le U_k$, then by Theorem~\ref{thm:main}, we have
$\ee\norm{x_{k+1}-x_\star}^2
\le \rho_k^{2}\ee \|x_k-x_\star\|^2  + 2\lambda_k G^2+\mu_k c^2 
\le \rho_k^{2}U_k + 2\lambda_k G^2+\mu_k c^2
= U_{k+1}$.
A key issue in computing a useful upper bound $U_k$ is how to choose the tuple $(\rho_k,\lambda_k,\mu_k,\nu_k)\in\mathcal{T}_k$.
If the stepsize is constant ($\alpha_k = \alpha$), then $\mathcal{T}_k$ is independent of $k$. Thus we may choose the same particular solution $(\rho,\lambda,\mu,\nu)$ for each $k$. Then, based on \eqref{eq:keyineq3}, if $\rho < 1$ we can obtain a bound of the following form for biased SGD:
\begin{align}
\label{eq:constBound}
\ee \|x_k-x_\star\|^2\le \rho^{2k} U_0+\frac{2\lambda G^2+\mu c^2}{1-\rho^2}.
\end{align} 
As discussed in Remark \ref{rem:remark1}, the above bound may be unnecessarily conservative.
Because of the recursive definition~\eqref{eq:recursion}, the bound $U_k$ depends solely on $U_0$ and the parameters $\{\rho_t,\lambda_t,\mu_t\}_{t=0}^{k-1}$. So we can seek the smallest possible upper bound by solving the optimization problem:
\[
\begin{aligned}
U_{k+1}^\textup{opt}\,=\,
\minimize_{\{\rho_t,\lambda_t,\mu_t,\nu_t\}_{t=0}^{k}} \qquad & U_{k+1} \\
\text{subject to} \qquad & U_{t+1} = \rho_t^2 U_t + 2\lambda_t G^2 + \mu_t c^2 & 0\le t \le k \\
& (\rho_t,\lambda_t,\mu_t,\nu_t) \in \mathcal{T}_t& 0\le t \le k
\end{aligned}
\]
A useful fact is that the above optimization problem can be solved in a sequential manner. Formally, we have
\begin{proposition}
The following holds for all $k$.
\begin{align}
U_{k+1}^\textup{opt}=\minimize_{(\rho,\lambda,\mu,\nu)\in\mathcal{T}_k}\quad
\rho^2 U_k^\textup{opt} + 2\lambda G^2 + \mu c^2
\end{align}
Consequently, a greedy approach where $U_{t+1}$ is optimized in terms of $U_t$ recursively for $t=0,\dots,k-1$ yields a bound $U_k$ that is in fact globally optimal over all possible choices of parameters $\{\rho_t,\lambda_t,\mu_t,\nu_t\}_{t=0}^{k}$.
\end{proposition}
\begin{proof}
This optimization problem being considered is similar to a dynamic programming and a recursive solution reminiscent of the Bellman equation can be derived for the optimal bound $U_k$.
\begin{align}\label{yuck}\notag
&U_{k+1}^\textup{opt} \\
&=
\minimize_{(\rho,\lambda,\mu,\nu)\in\mathcal{T}_k}
\left\{
\begin{aligned}
\minimize_{\{\rho_t,\lambda_t,\mu_t,\nu_t\}_{t=0}^{k-1}} \quad & U_{k+1} \\\notag
\text{subject to} \quad & U_{t+1} = \rho_t^2 U_t + 2\lambda_t G^2 + \mu_t c^2 && 0\le t \le k \\
& (\rho_t,\lambda_t,\mu_t,\nu_t) \in \mathcal{T}_t&& 0\le t < k \\
& (\rho,\lambda,\mu,\nu) = (\rho_k,\lambda_k,\mu_k,\nu_k)
\end{aligned} \right\} \\[3mm]
&=
\minimize_{(\rho,\lambda,\mu,\nu)\in\mathcal{T}_k}
\left\{
\begin{aligned}
\minimize_{\{\rho_t,\lambda_t,\mu_t,\nu_t\}_{t=0}^{k-1}} \quad & \rho^2 U_k + 2\lambda G^2 + \mu c^2 \\ \notag
\text{subject to} \quad & U_{t+1} = \rho_t^2 U_t + 2\lambda_t G^2 + \mu_t c^2 && 0 \le t < k \\
& (\rho_t,\lambda_t,\mu_t,\nu_t) \in \mathcal{T}_t&& 0\le t < k
\end{aligned} \right\} \\[3mm]
&= \minimize_{(\rho,\lambda,\mu,\nu)\in\mathcal{T}_k}\quad
\rho^2 U_k^\textup{opt} + 2\lambda G^2 + \mu c^2
\end{align}
Where the final equality in~\eqref{yuck} relies on the fact that $\rho^2 \ge 0$.
\end{proof}

Obtaining an explicit analytical formula for $U_k^\textup{opt}$ is not straightforward, since it involves solving a sequence of semidefinite programs. However, we can make use of Corollary~\ref{thm:cor1} to further upper-bound $U_k^\textup{opt}$. This works because Corollary~\ref{thm:cor1} gives an analytical parameterization of a \textit{subset} of $\mathcal{T}_k$. Denote this new upper bound by $\hat U_k$. By Corollary~\ref{thm:cor1}, we have:
\begin{equation}\label{eq:opt_simpler}
\begin{aligned}
\hat U_{k+1} \,\,=\,\, \minimize_{\zeta > 0} \qquad &
\rho^2 \hat U_k + 2\lambda G^2 + \mu c^2 \\
\text{subject to}\qquad &
\mu =\alpha_k^2(1+\zeta^{-1})\\
&\lambda =\alpha_k^2(1+\zeta)(1+\delta^2\zeta^{-1})\\
&\rho^2 =(1+\zeta)(1-2m\alpha_k+\tilde M\alpha_k^2 (1+\delta^2\zeta^{-1})) \\
&\zeta \ge \tfrac{\alpha_k M_{21}\delta^2}{1-\alpha_k M_{21}}
\end{aligned}
\end{equation}
Note that Corollary~\ref{thm:cor1} also places bounds on $\alpha_k$, which we assume are being satisfied here.
The optimization problem~\eqref{eq:opt_simpler} is a single-variable smooth constrained problem. It is straightforward to verify that $\mu$, $\lambda$, and $\rho^2$ are convex functions of $\zeta$ when $\zeta > 0$. Moreover, the inequality constraint on $\zeta$ is linear, so we deduce that~\eqref{eq:opt_simpler} is a convex optimization problem.

Thus, we have reduced the problem of recursively solving semidefinite programs (finding $U_k^\text{opt}$) to recursively solving single-variable convex optimization problems (finding $\hat U_k$). Ultimately, we obtain an upper bound on the expected error of biased SGD that is easy to compute:
\begin{align}\label{eq:bound_approx}
\ee \norm{x_k-x_\star}^2 \le U_k^\textup{opt} \le \hat U_k
\end{align}
Preliminary numerical simulations suggest that $\hat U_k$ seems to be equal to $U_k^\textup{opt}$ under the four sets of  assumptions in this paper.
However, we are unable to show $\hat{U_k}=U_k^\textup{opt}$ analytically. In the subsequent sections, we will solve the recursion for $\hat U_k$ analytically and derive convergence bounds for biased SGD.

\subsection{Analytical recursive bounds for biased SGD}
We showed in the previous section that $\ee \|x_k-x_\star\|^2\le \hat U_k$ for biased SGD, where $\hat U_k$ is the solution to~\eqref{eq:opt_simpler}. We now derive an analytical recursive formula for $\hat U_k$.
Let us simplify the optimization problem~\eqref{eq:opt_simpler}. Eliminating $\rho,\lambda,\mu$, we obtain
\begin{equation}\label{eq:opt_simpler2}
\begin{aligned}
\hat U_{k+1} \,\,=\,\, \minimize_{\zeta > 0} \qquad &
a_k(1+\zeta^{-1}) + b_k(1+\zeta)\\
\text{subject to}\qquad
&a_k=\alpha_k^2 \left( c^2+2\delta^2 G^2+\tilde M\delta^2 \hat U_k\right)\\
&b_k=\left(1-2m\alpha_k+\tilde M\alpha_k^2\right)\hat U_k+2\alpha_k^2 G^2\\
&\zeta \ge \tfrac{\alpha_k M_{21}\delta^2}{1-\alpha_k M_{21}}\\
\end{aligned}
\end{equation}
The assumptions on $\alpha_k$ from Corollary~\ref{thm:cor1} also imply that $a_k \ge 0$ and $b_k \ge 0$. We may now solve this problem explicitly and we summarize the solution to~\eqref{eq:opt_simpler2} in the following lemma.

\begin{lemma}
\label{lem:recursiveBound}
Consider biased SGD~\eqref{eq:SGerror} with $g\in\svx(m,\infty)$ for some $m>0$, and let $x_\star$ be the unique global minimizer of $g$. Given one of the four conditions on $f_i$ and the corresponding $M=\bsmtx M_{11} & M_{12}\\ M_{21} & M_{22}\esmtx$ and $G$ from Table \ref{tab:Mmat}, further assume $\alpha_k$ is strictly positive and satisfies $M_{21} \alpha_k\le 1$.
Then the error bound $\hat U_k$ defined in~\eqref{eq:opt_simpler2} can be computed recursively as follows. 
\begin{align}\label{eq:recursion_Uk_constrained}
\hat U_{k+1} &=
\begin{cases}
\left( \sqrt{a_k} + \sqrt{b_k} \right)^2 & \sqrt{\frac{a_k}{b_k}} \ge \frac{\alpha_k M_{21} \delta^2}{1-\alpha_k M_{21}} \\
a_k+b_k+ a_k \frac{1-\alpha_k M_{21}}{\alpha_k M_{21} \delta^2}+b_k \frac{\alpha_k M_{21} \delta^2}{1-\alpha_kM_{21}} & \text{otherwise}
\end{cases}
\end{align}
where $a_k$ and $b_k$ are defined in~\eqref{eq:opt_simpler2}. We may initialize the recursion at any $\hat U_0 \ge \ee \norm{x_0-x_\star}^2$.
\end{lemma}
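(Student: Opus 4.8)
The plan is to solve the one-variable minimization in~\eqref{eq:opt_simpler2} directly. Write the objective as $\phi(\zeta) = a_k(1+\zeta^{-1}) + b_k(1+\zeta) = (a_k+b_k) + a_k\zeta^{-1} + b_k\zeta$ over the feasible interval $\zeta \ge \zeta_{\min}$, where $\zeta_{\min} = \tfrac{\alpha_k M_{21}\delta^2}{1-\alpha_k M_{21}} \ge 0$. Since $a_k \ge 0$ and $b_k \ge 0$ (as noted in the excerpt, these follow from the stepsize assumption $M_{21}\alpha_k \le 1$), $\phi$ is convex on $(0,\infty)$. First I would treat the generic case $a_k, b_k > 0$: the unconstrained minimizer of $\phi$ on $(0,\infty)$ is $\zeta^\star = \sqrt{a_k/b_k}$ by AM--GM (or by setting $\phi'(\zeta) = b_k - a_k\zeta^{-2} = 0$), with optimal value $\phi(\zeta^\star) = a_k + b_k + 2\sqrt{a_k b_k} = (\sqrt{a_k}+\sqrt{b_k})^2$.

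Next I would split on whether the unconstrained optimum is feasible. If $\zeta^\star = \sqrt{a_k/b_k} \ge \zeta_{\min}$, the constraint is inactive and $\hat U_{k+1} = (\sqrt{a_k}+\sqrt{b_k})^2$, giving the first branch of~\eqref{eq:recursion_Uk_constrained}. If instead $\sqrt{a_k/b_k} < \zeta_{\min}$, then by convexity $\phi$ is increasing on $[\zeta_{\min},\infty)$, so the minimum over the feasible set is attained at the boundary $\zeta = \zeta_{\min}$, yielding $\hat U_{k+1} = \phi(\zeta_{\min}) = a_k + b_k + a_k\,\zeta_{\min}^{-1} + b_k\,\zeta_{\min} = a_k+b_k + a_k\tfrac{1-\alpha_k M_{21}}{\alpha_k M_{21}\delta^2} + b_k\tfrac{\alpha_k M_{21}\delta^2}{1-\alpha_k M_{21}}$, which is exactly the second branch. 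This matches~\eqref{eq:recursion_Uk_constrained} once one checks the branch condition is stated as $\sqrt{a_k/b_k} \ge \zeta_{\min}$ versus its negation, so the two cases partition all possibilities.

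Then I would dispose of the degenerate cases where $a_k = 0$ or $b_k = 0$, which arise when $\delta = c = 0$ (forcing $a_k = 0$) or in edge situations for $b_k$. If $b_k = 0$ and $a_k > 0$, $\phi(\zeta) = a_k(1 + \zeta^{-1})$ is decreasing, so the infimum over $\zeta \ge \zeta_{\min}$ is at $\zeta = \zeta_{\min}$ when $\zeta_{\min}>0$ (second branch, with $b_k$-terms vanishing), or is the infimum $a_k$ as $\zeta\to\infty$ when $\zeta_{\min}=0$; one checks the first-branch formula $(\sqrt{a_k}+0)^2 = a_k$ correctly captures this since the branch condition $\sqrt{a_k/b_k} = +\infty \ge \zeta_{\min}$ holds. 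If $a_k = 0$ and $b_k > 0$, $\phi(\zeta) = b_k(1+\zeta)$ is increasing, minimized at $\zeta = \zeta_{\min}$; the first branch gives $(\sqrt{b_k})^2 = b_k = \phi(0)$, which is correct only when $\zeta_{\min}=0$, and indeed $\sqrt{0/b_k} = 0 \ge \zeta_{\min}$ forces $\zeta_{\min}=0$, so the branch condition self-consistently selects the right formula; otherwise the second branch with $a_k=0$ gives $b_k + b_k\zeta_{\min} = \phi(\zeta_{\min})$. The last trivial case $a_k = b_k = 0$ gives $\hat U_{k+1}=0$ from either formula.

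The main obstacle is not any single computation --- each is elementary --- but rather the careful bookkeeping of the degenerate cases and verifying that the stated branch condition $\sqrt{a_k/b_k} \ge \zeta_{\min}$ (interpreted with the usual conventions when $b_k = 0$) cleanly partitions all parameter regimes and selects the correct closed-form expression in each. I would also double-check that the stepsize hypothesis $0 < M_{21}\alpha_k \le 1$ indeed guarantees $a_k \ge 0$ and $b_k \ge 0$: for $b_k$ this uses $1 - 2m\alpha_k + \tilde M\alpha_k^2 \ge (1-m\alpha_k)^2 \ge 0$ as in the proof of Corollary~\ref{thm:cor1} (since $\tilde M \ge m^2$), and for $a_k$ it is immediate from $c, \delta, G, \tilde M, \hat U_k \ge 0$. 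With those facts in hand, the lemma follows by the convex-optimization argument above.
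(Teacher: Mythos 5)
Your proposal is correct and follows essentially the same route as the paper: minimize the convex single-variable objective $a_k(1+\zeta^{-1})+b_k(1+\zeta)$, take the unconstrained optimum $\zeta=\sqrt{a_k/b_k}$ when it is feasible, and otherwise take the boundary point $\zeta=\tfrac{\alpha_k M_{21}\delta^2}{1-\alpha_k M_{21}}$, which yields the two branches of~\eqref{eq:recursion_Uk_constrained}. Your only addition is a more explicit bookkeeping of the degenerate cases $a_k=0$ or $b_k=0$ (the paper only remarks on the $\delta=c=0$ case, where the optimum is approached as $\zeta\to 0$), which is consistent with the paper's treatment.
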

\begin{proof}
In Case \rom{2}, we have $M_{21}=0$ so the constraint on $\zeta$ is vacuously true.  Therefore, the only constraint on $\zeta$ in~\eqref{eq:opt_simpler2} is $\zeta>0$ and we can solve the problem by setting the derivative of the objective function with respect to $\zeta$ equal to zero. The result is $\zeta_k =\sqrt{\frac{a_k}{b_k}}$. In Cases \rom{1}, \rom{3}, and \rom{4}, we have $M_{21}>0$. By convexity, the optimal $\zeta_k$ is either the unconstrained optimum (if it is feasible) or the boundary point (otherwise). Hence \eqref{eq:recursion_Uk_constrained} holds as desired.
Note that if $\delta=c=0$, then $a_k=0$. This corresponds to the pathological case where the objective reduces to $b_k(1+\zeta)$. Here, the optimum is achieved as $\zeta\to 0$, which corresponds to $\mu\to\infty$ in~\eqref{eq:opt_simpler}. This does not cause a problem because $c=0$ so $\mu$ does not appear in the objective function. The recursion~\eqref{eq:recursion_Uk_constrained} then simplifies to $\hat U_{k+1} = b_k$.
\end{proof}

\begin{remark}\label{rem:unconstrained}
If $M_{21}=0$ (Case~\rom{2} in Table~\ref{tab:Mmat}) or if $\delta=0$ (no multiplicative noise), the optimization problem~\eqref{eq:opt_simpler2} reduces to an unconstrained optimization problem whose solution is
\begin{align}\label{eq:recursion_Uk_unconstrained}\notag
\hat U_{k+1} &= \left( \sqrt{a_k} + \sqrt{b_k} \right)^2 \\
&= \left( \alpha_k \sqrt{c^2+2\delta^2 G^2+\tilde M\delta^2 \hat U_k} + \sqrt{\left(1-2m\alpha_k+\tilde M\alpha_k^2\right)\hat U_k+2G^2\alpha_k^2} \right)^2
\end{align}
\end{remark}

\section{Analytical rate bounds for the constant stepsize case}
\label{sec:fixed_alpha}

In this section, we present non-recursive error bounds for biased SGD with constant stepsize. Specifically, we assume $\alpha_k=\alpha$ for all $k$ and we either apply Lemma~\ref{lem:recursiveBound} or carefully choose a constant $\zeta$ in order to obtain a tractable bound for $\hat U_k$. The bounds derived in this section highlight the trade-offs inherent in the design of biased SGD. 

\subsection{Linearization of the nonlinear recursion}
\label{sec:linearization}

This first result applies to the case where $\delta=0$ or $M_{21}=0$ (Case \rom{2}) and leverages Remark~\ref{rem:unconstrained} to obtain a bound for biased SGD.
\begin{corollary}\label{cor:linear1}
Consider biased SGD~\eqref{eq:SGerror} with $g\in\svx(m,\infty)$ for some $m>0$, and let $x_\star$ be the unique global minimizer of $g$. Given one of the four conditions on $f_i$ and the corresponding $M=\bsmtx M_{11} & M_{12}\\ M_{21} & M_{22}\esmtx$ and $G$ from Table \ref{tab:Mmat}, further assume that $\alpha_k=\alpha>0$ (constant stepsize), $M_{21}\alpha \le 1$, and either $\delta=0$ or $M_{21}=0$. Define $p,q,r,s \ge 0$ as follows.
\begin{align}\label{eq:sub_values_pqrs}
p &= \tilde M \delta^2 \alpha^2,
&
q &= (c^2 + 2G^2\delta^2)\alpha^2,
&
r &= 1-2m \alpha +\tilde M \alpha^2,
&
s &= 2 G^2\alpha ^2.
\end{align}
Where $\tilde M = M_{11} + 2mM_{12}$. If $\sqrt{p}+\sqrt{r} < 1$ then we have the following iterate error bound:
\begin{align}\label{eq:bound_nonrecursive}
\ee \norm{x_k-x_\star}^2 \le 
\left( \tfrac{p\sqrt{\hat U_\star}}{\sqrt{p \hat U_\star + q}} + \tfrac{r\sqrt{\hat U_\star}}{\sqrt{r \hat U_\star + s}} \right)^k \ee \norm{x_0-x_\star}^2 + \hat U_\star,
\end{align}
where the fixed point $\hat U_\star$ is given by
\begin{equation}\label{eq:Ustar_pqrs}
\hat U_\star = \frac{(p-r) (s-q)+q+s + 2 \sqrt{p s^2+q^2 r+q s (1-p-r)}}{(p-r)^2-2 (p+r)+1}.
\end{equation}
\end{corollary}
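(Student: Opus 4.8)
The plan is to invoke Remark~\ref{rem:unconstrained}, which under the hypotheses ($\delta=0$ or $M_{21}=0$, constant stepsize $\alpha$) gives the closed-form nonlinear recursion $\hat U_{k+1} = (\sqrt{a_k}+\sqrt{b_k})^2$ where, after substituting $\alpha_k=\alpha$ and the definitions in~\eqref{eq:opt_simpler2}, we have $a_k = p\,\hat U_k + q$ and $b_k = r\,\hat U_k + s$ with $p,q,r,s$ as in~\eqref{eq:sub_values_pqrs}. Thus $\hat U_{k+1} = \bigl(\sqrt{p\hat U_k+q}+\sqrt{r\hat U_k+s}\bigr)^2$, a scalar nonlinear recursion in one variable. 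Since $\ee\norm{x_k-x_\star}^2 \le \hat U_k$ by~\eqref{eq:bound_approx}, it suffices to analyze this recursion: show it has a fixed point $\hat U_\star$, identify it explicitly, and bound the rate of convergence of $\hat U_k \to \hat U_\star$.

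First I would locate the fixed point by setting $\hat U_\star = (\sqrt{p\hat U_\star+q}+\sqrt{r\hat U_\star+s})^2$. Writing $\phi(U) = (\sqrt{pU+q}+\sqrt{rU+s})^2 = (p+r)U + q + s + 2\sqrt{(pU+q)(rU+s)}$, the fixed-point equation $\phi(U)=U$ becomes $(1-p-r)U - q - s = 2\sqrt{(pU+q)(rU+s)}$; squaring both sides yields a quadratic in $U$, and solving it (and checking the sign condition $(1-p-r)U \ge q+s$ selects the correct root) gives the formula~\eqref{eq:Ustar_pqrs}. The condition $\sqrt p + \sqrt r < 1$ guarantees $1-p-r>0$ (indeed more), so the denominator $(p-r)^2 - 2(p+r)+1 = (1-(\sqrt p+\sqrt r)^2)(1-(\sqrt p-\sqrt r)^2)$ of~\eqref{eq:Ustar_pqrs} is positive and the quadratic has a valid nonnegative root; I'd record this factorization since it both justifies positivity and will reappear in the rate analysis.

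Next, for the geometric decay I would exploit monotonicity and concavity of $\phi$. The map $\phi$ is increasing and concave on $[0,\infty)$ (each $\sqrt{pU+q}$ is concave, sums and squares of nonnegative concave increasing functions behave suitably — more directly, $\phi''\le 0$ by a short computation), so for $U \ge \hat U_\star$ we have $\phi(U) - \hat U_\star = \phi(U)-\phi(\hat U_\star) \le \phi'(\hat U_\star)(U - \hat U_\star)$, and by concavity the slope $\phi'(\hat U_\star) < 1$ (strictly, because $\hat U_\star$ is a stable fixed point — here the hypothesis $\sqrt p + \sqrt r<1$ enters again). Computing $\phi'(\hat U_\star) = p + r + \sqrt{p/(p\hat U_\star+q)}\sqrt{r\hat U_\star+s} + \sqrt{r/(r\hat U_\star+s)}\sqrt{p\hat U_\star+q}$; using the fixed-point relation $\sqrt{(p\hat U_\star+q)(r\hat U_\star+s)} = \tfrac12((1-p-r)\hat U_\star - q - s)$ one can massage this into the stated contraction factor $\tfrac{p\sqrt{\hat U_\star}}{\sqrt{p\hat U_\star+q}} + \tfrac{r\sqrt{\hat U_\star}}{\sqrt{r\hat U_\star+s}}$ — this algebraic identification is the one genuinely fiddly computation. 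Then iterating $\hat U_{k} - \hat U_\star \le \phi'(\hat U_\star)^k(\hat U_0 - \hat U_\star)$ from $\hat U_0 = \ee\norm{x_0-x_\star}^2$ (assuming WLOG $\hat U_0 \ge \hat U_\star$; otherwise $\hat U_k \le \hat U_\star$ already and the bound holds trivially since the contraction factor is nonnegative), and combining with $\ee\norm{x_k-x_\star}^2 \le \hat U_k$, gives~\eqref{eq:bound_nonrecursive}.

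The main obstacle is the last step: verifying that $\phi'(\hat U_\star)$ equals the claimed rate constant $\tfrac{p\sqrt{\hat U_\star}}{\sqrt{p\hat U_\star+q}} + \tfrac{r\sqrt{\hat U_\star}}{\sqrt{r\hat U_\star+s}}$ rather than some other equivalent-looking expression, and simultaneously confirming this quantity is strictly less than $1$ under $\sqrt p+\sqrt r<1$. The cleanest route is probably not to compute $\phi'(\hat U_\star)$ head-on but to use the tangent-line bound at $\hat U_\star$ together with the substitution $\sqrt{pU+q} \le \sqrt{p\hat U_\star+q}\cdot\sqrt{U/\hat U_\star}$ valid for $U \le \hat U_\star$-type scaling arguments — i.e. bound each square root by its secant through the origin and the fixed point, which directly produces the $\sqrt{\hat U_\star}/\sqrt{p\hat U_\star+q}$ factors. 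I'd double-check the edge cases $q=0$ or $s=0$ (which occur when $c=\delta=0$ or $G=0$) to ensure no division by zero in the final formula, flagging that when $s=0$ the second term collapses to $\sqrt r$ and when $p=0$ (the $\delta=0$ subcase) the first term vanishes, consistent with the recursion degenerating to the affine map $\hat U_{k+1} = r\hat U_k + s + 2\sqrt{q}\sqrt{r\hat U_k + s} + q$.
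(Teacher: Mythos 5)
Your proposal is correct and takes essentially the same route as the paper: reduce to the scalar recursion of Remark~\ref{rem:unconstrained}, identify the fixed point $\hat U_\star$ from \eqref{eq:Ustar_pqrs}, and use concavity of the update map to bound it by its first-order (tangent-line) expansion at $\hat U_\star$, whose slope is exactly the stated contraction factor, then iterate and drop the nonpositive term to get \eqref{eq:bound_nonrecursive}. The only slip is cosmetic: your intermediate expression for $\phi'(\hat U_\star)$ should carry factors $p$ and $r$ rather than $\sqrt{p}$ and $\sqrt{r}$, and the identification you flag as fiddly is immediate from the fixed-point identity $\sqrt{p\hat U_\star+q}+\sqrt{r\hat U_\star+s}=\sqrt{\hat U_\star}$ together with $\phi'(U)=\bigl(\sqrt{pU+q}+\sqrt{rU+s}\bigr)\bigl(\tfrac{p}{\sqrt{pU+q}}+\tfrac{r}{\sqrt{rU+s}}\bigr)$, so no secant-based workaround is needed.
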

\begin{proof}
By Remark~\ref{rem:unconstrained}, we have the nonlinear recursion~\eqref{eq:recursion_Uk_unconstrained} for $\hat U_k$. This recursion is of the form
\begin{equation}\label{eq:recursion_pqrs}
\hat U_{k+1} = \left( \sqrt{p \hat U_k + q} + \sqrt{r \hat U_k + s} \right)^2,
\end{equation}
where $p,q,r,s > 0$ are given in~\eqref{eq:sub_values_pqrs}.
It is straightforward to verify that the right-hand side of~\eqref{eq:recursion_pqrs} is a monotonically increasing concave function of $\hat U_k$ and its asymptote is a line of slope $(\sqrt{p}+\sqrt{r})^2$. Thus,~\eqref{eq:recursion_pqrs} will have a unique fixed point when $\sqrt{p}+\sqrt{r} < 1$. We will return to this condition shortly. When a fixed point exists, it is found by setting $\hat U_k = \hat U_{k+1} = \hat U_\star$ in~\eqref{eq:recursion_pqrs} and yields~$U_\star$ given by~\eqref{eq:Ustar_pqrs}.
The concavity property further guarantees that any first-order Taylor expansion of the right-hand side of~\eqref{eq:recursion_pqrs} yields an upper bound to $\hat U_{k+1}$. Expanding about $\hat U_\star$, we obtain:
\begin{equation}\label{eq:delta_noisy}
\hat U_{k+1}-\hat U_\star \le \left( \tfrac{p\sqrt{\hat U_\star}}{\sqrt{p \hat U_\star + q}} + \tfrac{r\sqrt{\hat U_\star}}{\sqrt{r \hat U_\star + s}} \right) \left(\hat U_k - \hat U_\star\right)
\end{equation}
which leads to the following non-recursive bound for biased SGD.
\begin{align}\label{eq:bound_nonrecursive1}
\notag
\ee \norm{x_k-x_\star}^2 \le
\hat U_{k} &\le  \left( \tfrac{p\sqrt{\hat U_\star}}{\sqrt{p \hat U_\star + q}} + \tfrac{r\sqrt{\hat U_\star}}{\sqrt{r \hat U_\star + s}} \right)^k (\hat U_0 -\hat U_\star) + \hat U_\star\\
&\le \left( \tfrac{p\sqrt{\hat U_\star}}{\sqrt{p \hat U_\star + q}} + \tfrac{r\sqrt{\hat U_\star}}{\sqrt{r \hat U_\star + s}} \right)^k \hat U_0  + \hat U_\star
\end{align}
Since this bound holds for any $\hat U_0 \ge \ee \norm{x_0-x_\star}^2$, it holds in particular when we have equality, and thus we obtain~\eqref{eq:bound_nonrecursive} as required.
\end{proof}

The condition that $\sqrt{p}+\sqrt{r}<1$ from Corollary~\ref{cor:linear1}, which is necessary for the existence of a fixed-point of~\eqref{eq:recursion_pqrs}, is equivalent to an upper bound on~$\alpha$. After manipulation, it amounts to:
\begin{equation}\label{alph2}
\alpha < \frac{2(m-\delta\sqrt{\tilde M})}{\tilde M(1-\delta^2)}
\end{equation}
Therefore, we can ensure that $\sqrt{p}+\sqrt{r}<1$ when $\delta<m/\sqrt{\tilde M}$, and $\alpha$ is sufficiently small. 
If $\delta=0$, 
the stepsize bound \eqref{alph2} is only relevant in Case \rom{2}. For Cases \rom{1}, \rom{3}, and \rom{4}, the bound $M_{21}\alpha\le 1$ imposes a stronger restriction on $\alpha$ (see Corollary \ref{thm:cor1}).  If $\delta \neq 0$, we only consider Case \rom{2} ($M_{21}=0$) and the resultant bound for $\alpha$ is $\frac{m-\sqrt{2}L\delta}{L^2(1-\delta^2)}$. 
The condition $\delta<m/\sqrt{\tilde M}$ becomes $\delta<m/(\sqrt{2}L)$.

To see the trade-offs in the design of biased SGD, we can take Taylor expansions of several key quantities about $\alpha=0$ to see how changes in $\alpha$ affect convergence:
\begin{subequations}\label{eq:asymalpha1}
	\begin{gather} 
	\label{e:ustar}
	\hat U_\star \approx \frac{c^2+2 \delta ^2 G^2}{m^2-\delta ^2 \tilde M} +
	\frac{m \left(c^2 (\tilde M-m^2)+2 \left(1-\delta ^2\right) G^2 m^2\right)}{(m^2-\delta ^2 \tilde M)^2}\alpha + O(\alpha^2)\\\label{rhorate1}
	\left( \tfrac{p\sqrt{\hat U_\star}}{\sqrt{p \hat U_\star + q}} + \tfrac{r\sqrt{\hat U_\star}}{\sqrt{r \hat U_\star + s}} \right) \approx 1-\frac{(m^2-\delta ^2\tilde M)}{m}\alpha + O(\alpha^2)
	\end{gather}
\end{subequations}
We conclude that when $\delta<m/\sqrt{\tilde M}$,  biased SGD converges linearly to a ball whose radius is roughly $\hat U_\star \ge 0$.
One can decrease the stepsize $\alpha$ to control the final error $\hat U_\star$. However, due to the errors in the individual gradient updates, one cannot guarantee the final error $\ee\norm{x_k-x_\star}^2$ smaller than
$\frac{c^2+2 \delta ^2 G^2}{m^2-\delta ^2 \tilde M}$. This is consistent with our intuition; one could inject noise in an adversarial manner to shift the optimum point away from $x_\star$ so there is no way to guarantee that $\{x_k\}$ converges to $x_\star$ just by decreasing the stepsize $\alpha$.

\begin{remark}\label{rem:differentiability}
One can check that the left side of \eqref{rhorate1} is not differentiable at $(c, \alpha)=(0, 0)$. Consequently, taking a Taylor expansion with respect to $\alpha$ and then setting $c=0$ does not yield the same result as first setting $c=0$ and then taking a Taylor expansion with respect to $\alpha$ of the resulting expression. This explains why \eqref{rhorate1} does not reduce to $\rho^2=1-2m\alpha+O(\alpha^2)$ when $c=\delta=0$.
It is worth noting that the higher order term $O(\alpha^2)$ in \eqref{rhorate1} depends on $c$. Indeed, it blows up as $c\to 0$. Therefore, the rate formula \eqref{rhorate1} only describes the stepsize design trade-off for a fixed positive $c$ and sufficiently small $\alpha$. 
Similar situation even holds for the case where $G=0$. As long as $c\neq 0$, the rate formula is not going to reduce to $\rho^2=1-2m\alpha+O(\alpha^2)$  due to the fact that the left side of \eqref{rhorate1}  is not differentiable at $(c,\alpha)=(0,0)$.
\end{remark}

The non-recursive bound~\eqref{eq:bound_nonrecursive} relied on a linearization of the recursive formula \eqref{eq:recursion_pqrs}, which involved a time-varying $\zeta_k$. It is emphasized that we assumed that either $\delta=0$ or $M_{21}=0$.
In the other cases, namely $\delta >0$ and $M_{21}>0$ (Case \rom{1}, \rom{3}, or \rom{4}), we cannot ignore the additional condition $\zeta_k\ge \frac{\alpha M_{21} \delta^2}{1-\alpha M_{21}}$ and we must use the hybrid recursive formula~\eqref{eq:recursion_Uk_constrained}. This hybrid formulation is more problematic to solve explicitly. However, if we are mostly interested in the regime where $\alpha$ is small, we can obtain non-recursive bounds similar to~\eqref{eq:bound_nonrecursive} by carefully choosing a constant $\zeta$ for all $k$. We will develop these bounds in the next section.

\subsection{Non-recursive bounds via a fixed $\zeta$ parameter}

When $\alpha$ is small, we can choose $\zeta=m\alpha$ and we obtain the following result.

\begin{corollary}\label{cor:linear2}
Consider biased SGD~\eqref{eq:SGerror} with $g\in\svx(m,\infty)$ for some $m>0$, and let $x_\star$ be the unique global minimizer of $g$. Given one of the four conditions on $f_i$ and the corresponding $M=\bsmtx M_{11} & M_{12}\\ M_{21} & M_{22}\esmtx$ and $G$ from Table \ref{tab:Mmat}, further assume that $\alpha_k=\alpha>0$ (constant stepsize),  and $M_{21}\left(\alpha+\frac{\delta^2}{m}\right)\le 1$.\footnote{When $M_{21}=0$, this condition always holds. When $\delta=0$, this condition is equivalent to $M_{21}\alpha \le 1$. Hence the above corollary can be directly applied if $M_{21}=0$ or $\delta=0$. If $M_{21}> 0$ and $\delta>0$, the condition $M_{21}\left(\alpha+\frac{\delta^2}{m}\right)\le 1$ can be rewritten as a condition on $\alpha$ in a case-by-case manner.} Finally, assume that
\begin{equation}\label{rho_messy}
0 < \tilde \rho^2 < 1
\quad\text{where }
\tilde \rho^2 = 1-\tfrac{m^2-\tilde M \delta^2}{m}\alpha+(\tilde M(1+\delta^2)-2m^2) \alpha^2+\tilde M m\alpha^3.
\end{equation}
Note \eqref{rho_messy} holds for $\alpha$ sufficiently small.
Then, we have the following error bound for the iterates
\begin{align}
\label{eq:constF}
\ee \|x_k-x_\star\|^2 \le \tilde \rho^{2k}\ee \norm{x_0-x_\star}^2 +\tilde U_\star
\end{align}
where $\tilde U_\star$ is given by
\begin{align}\label{utildestar}
\tilde U_\star=\frac{2\delta^2 G^2+c^2+m(c^2+ 2G^2(1+\delta^2))\alpha+2m^2G^2\alpha^2}{(m^2-\tilde M \delta^2)-m(\tilde M(1+\delta^2)-2m^2) \alpha-\tilde M m^2\alpha^2}
\end{align}
\end{corollary}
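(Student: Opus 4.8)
The plan is to instantiate the one-parameter family of feasible LMI points from Corollary~\ref{thm:cor1} with the constant stepsize $\alpha_k=\alpha$ and the constant auxiliary parameter $\zeta_k=m\alpha$ for every $k$, feed these into Theorem~\ref{thm:main}, and then unroll the resulting one-step inequality. Concretely, Corollary~\ref{thm:cor1} certifies that the nonnegative tuple $(\rho^2,\lambda,\mu,\nu)$ given by \eqref{eq:mainFor}--\eqref{eq:mainFor1} solves the LMI \eqref{eq:LMI_big} provided $\zeta_k>0$, $M_{21}\alpha\le1$, and $\zeta_k\ge\frac{\alpha M_{21}\delta^2}{1-\alpha M_{21}}$. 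So the first task is to check that $\zeta=m\alpha$ meets these requirements: positivity is immediate, and clearing the denominator shows that $m\alpha\ge\frac{\alpha M_{21}\delta^2}{1-\alpha M_{21}}$ is equivalent to $m(1-\alpha M_{21})\ge M_{21}\delta^2$, i.e.\ to $M_{21}\bigl(\alpha+\tfrac{\delta^2}{m}\bigr)\le1$, which is precisely the hypothesis of the corollary and in particular also forces $M_{21}\alpha\le1$; the degenerate cases $M_{21}=0$ and $\delta=0$, where the $\zeta$-constraint is vacuous, reduce to the footnoted observations. Theorem~\ref{thm:main} then gives $\ee\|x_{k+1}-x_\star\|^2\le\rho^2\,\ee\|x_k-x_\star\|^2+(2\lambda G^2+\mu c^2)$ for all $k$.

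The second task is algebraic. Substituting $\zeta=m\alpha$ into \eqref{eq:mainFor} yields $\mu=\alpha^2+\tfrac{\alpha}{m}$, $\lambda=\tfrac{\alpha(1+m\alpha)(m\alpha+\delta^2)}{m}$, and $\rho^2=(1+m\alpha)\bigl(1-2m\alpha+\tilde M\alpha^2+\tfrac{\tilde M\delta^2}{m}\alpha\bigr)$; expanding and collecting powers of $\alpha$ in $\rho^2$ reproduces exactly the polynomial $\tilde\rho^2$ of \eqref{rho_messy}, while $2\lambda G^2+\mu c^2=\tfrac{\alpha(1+m\alpha)}{m}\bigl(c^2+2G^2\delta^2+2G^2m\alpha\bigr)$. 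Under the hypothesis $0<\tilde\rho^2<1$, recursing the one-step inequality as in \eqref{eq:keyineq3}/\eqref{eq:constBound} produces $\ee\|x_k-x_\star\|^2\le\tilde\rho^{2k}\,\ee\|x_0-x_\star\|^2+\tfrac{2\lambda G^2+\mu c^2}{1-\tilde\rho^2}$, which is \eqref{eq:constF} with $\tilde U_\star=\tfrac{2\lambda G^2+\mu c^2}{1-\tilde\rho^2}$. Reading off $1-\tilde\rho^2=\tfrac{\alpha}{m}\bigl[(m^2-\tilde M\delta^2)-m(\tilde M(1+\delta^2)-2m^2)\alpha-\tilde M m^2\alpha^2\bigr]$ from \eqref{rho_messy}, the factor $\tfrac{\alpha}{m}$ cancels, the remaining numerator $(1+m\alpha)(c^2+2G^2\delta^2+2G^2m\alpha)$ expands to the numerator of \eqref{utildestar}, and the ratio is exactly $\tilde U_\star$ as claimed. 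Lastly, since $\tilde\rho^2=1-\tfrac{m^2-\tilde M\delta^2}{m}\alpha+O(\alpha^2)$ and $\tilde M\ge m^2$ in every case of Table~\ref{tab:Mmat}, the condition $\delta<m/\sqrt{\tilde M}$ makes the linear coefficient negative, so $0<\tilde\rho^2<1$ holds for all sufficiently small $\alpha$, which justifies the parenthetical remark following \eqref{rho_messy}.

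There is no conceptual obstacle: the choice $\zeta=m\alpha$ is the only idea, and everything after it is bookkeeping. The two places where care is genuinely required are the feasibility check of the first step --- verifying that the auxiliary-variable constraint $\zeta\ge\frac{\alpha M_{21}\delta^2}{1-\alpha M_{21}}$ (and simultaneously the nonnegativity of $\nu$) collapses exactly to the stated hypothesis $M_{21}(\alpha+\delta^2/m)\le1$, and handling the $M_{21}=0$ and $\delta=0$ edge cases --- and the two polynomial identities $\rho^2\big|_{\zeta=m\alpha}=\tilde\rho^2$ and $\tfrac{2\lambda G^2+\mu c^2}{1-\tilde\rho^2}=\tilde U_\star$, which are routine but tedious to verify by hand.
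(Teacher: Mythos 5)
Your proposal is correct and follows essentially the same route as the paper: the paper also fixes $\zeta = m\alpha$, uses the hypothesis $M_{21}\bigl(\alpha+\tfrac{\delta^2}{m}\bigr)\le 1$ to verify feasibility of this choice in the family from Corollary~\ref{thm:cor1}, and then unrolls the resulting linear recursion (the paper phrases this through the $\hat U_k$/$\tilde U_k$ recursion of~\eqref{eq:opt_simpler2}, which is built on the same corollary, rather than invoking Theorem~\ref{thm:main} and Remark~\ref{rem:remark1} directly as you do). Your algebraic identities for $\rho^2\big|_{\zeta=m\alpha}=\tilde\rho^2$ and $\tfrac{2\lambda G^2+\mu c^2}{1-\tilde\rho^2}=\tilde U_\star$ check out, so the argument is complete.
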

\begin{proof}
Set $\zeta=m\alpha$ in the optimization problem~\eqref{eq:opt_simpler2}. This defines a new recursion for a quantity $\tilde U_k$ that upper-bounds $\hat U_k$ since we are choosing a possibly sub-optimal $\zeta$. Our assumption $M_{21}\left(\alpha+\frac{\delta^2}{m}\right)\le 1$ guarantees that $\zeta \ge \frac{\alpha M_{21}\delta^2}{1-\alpha M_{21}}$ when $\zeta = m\alpha$. Hence our choice of $\zeta$ is a feasible choice for \eqref{eq:opt_simpler2}. This leads to:
\begin{align*}
\tilde U_{k+1} &= a_k(1+\tfrac{1}{m\alpha}) + b_k(1+m\alpha) \\
&= \tilde \rho^2 \tilde U_k + \bigl(\alpha^2 ( c^2+2\delta^2 G^2)(1+\tfrac{1}{m\alpha}) +
2\alpha^2 G^2(1+m\alpha) \bigr)
\end{align*}
This is a simple linear recursion that we can solve explicitly in a similar way to the recursion in Remark~\ref{rem:remark1}. After simplifications, we obtain~\eqref{eq:constF} and \eqref{utildestar}.
\end{proof}

The linear rate of convergence in~\eqref{eq:constF} is of the same order as the one obtained in Corollary~\ref{cor:linear1} and~\eqref{rhorate1}. Namely,
\begin{align}
\label{eq:trho_approx}
\tilde \rho^2 \approx 1-\frac{(m^2-\tilde M \delta^2)}{m}\alpha+O(\alpha^2)
\end{align}
Likewise, the limiting error $\tilde U_\star$ from \eqref{utildestar} can be expanded as a series in $\alpha$ and we obtain a result that matches the small-$\alpha$ limit of $\hat U_\star$ from~\eqref{e:ustar} up to linear terms. Namely,
\begin{align}\label{utilde}
\tilde U_\star\approx \frac{c^2+2\delta^2 G^2}{m^2-\tilde M \delta^2}+
\frac{m \left(c^2 (\tilde M-m^2)+2 \left(1-\delta ^2\right) G^2 m^2\right)}{(m^2-\delta ^2 \tilde M)^2}\alpha + O(\alpha^2)
\end{align}
Therefore, \eqref{eq:constF} can give a reasonable non-recursive bound for biased SGD with small $\alpha$ even for the cases where $M_{21}>0$ and $\delta >0$.

Now we discuss the acceptable relative noise level under various assumptions on $f_i$. Based on \eqref{eq:trho_approx}, we need $m^2-\tilde M \delta^2>0$ to ensure $\tilde \rho^2<1$ for sufficiently small $\alpha$. The other constraint  $M_{21}\left(\alpha+\frac{\delta^2}{m}\right)\le 1$ enforces $M_{21} \delta^2<m$. 
Depending on which case we are dealing with, the conditions $\delta<m/\sqrt{\tilde M}$ and $M_{21} \delta^2 <m$ impose an upper bound on admissible values of $\delta$. See Table~\ref{tab:deltabound}.

\begin{table}[h!]
\centering
\begin{tabular}{c|c|c|c|c} \hline\rule{0pt}{2.6ex}
Case & \rom{1} & \rom{2} & \rom{3} & \rom{4}  \\\hline\rule{0pt}{2.6ex}
$\tilde M = M_{11} + 2mM_{12}$ & $m^2$ & $2L^2$ & $2mL$ & $2m^2$ \\\rule{0pt}{2.6ex}
$\delta$ bound & $1$ & $\frac{m}{\sqrt{2} L}$ & $\sqrt{\frac{m}{2L}}$ & $\sqrt{\frac{m}{L+m}}$ \\\hline
\end{tabular}
\caption{Upper bound on $\delta$ for the four different cases described in Table~\ref{tab:Mmat}.}
\label{tab:deltabound}
\end{table}

We can clearly see that for $\ell_2$-regularized logistic regression and support vector machines which admit the assumption in Case \rom{1}, biased SGD is robust to the relative noise. Given the condition $\delta<1$, the iterates of biased SGD will stay in some ball, although the size of the ball could be large. 
Comparing the  bound for Cases \rom{2}, \rom{3}, and \rom{4}, we can see the allowable relative noise level increases as the assumptions on $f_i$ become stronger.

As previously mentioned, the bound of Corollary~\ref{cor:linear2} requires a sufficiently small $\alpha$. 
Specifically, the stepsize $\alpha$ must satisfy $M_{21}\left(\alpha+\frac{\delta^2}{m}\right)\le 1$ and \eqref{rho_messy}, which can be solved to obtain explicit upper bounds on $\alpha$. Details are omitted.

\paragraph{Sensitivity analysis.}  Based on \eqref{eq:trho_approx},  the convergence rate $\bar{\rho}^2$ can be estimated as $1-\frac{m^2-\tilde{M}\delta^2}{m}\alpha$ for small $\alpha$, which is independent of $c$. Hence the misspecification in the value of $c$ does not impact the value of $\bar{\rho}^2$. The derivative of $1-\frac{m^2-\tilde{M}\delta^2}{m}\alpha$  with respect to $\delta$ is $2\tilde{M}\delta \alpha/m$. Hence, if we perturb the value of $\delta$ by $\epsilon$, the change in the value of $\bar{\rho}^2$ is roughly equal to $2\tilde{M}\delta \alpha\epsilon/m$. Similarly,  we can perform a sensitivity analysis for the final optimization error term $\bar{U}_\star$ by taking the derivative of the right side of \eqref{utilde} with respect to $\delta$ (or $c$).

\paragraph{Conservatism of Corollary \ref{cor:linear2}.}  Corollary \ref{cor:linear2} gives a reasonable non-recursive bound for biased SGD with small $\alpha$. However, if we consider Case \rom{2},  it can be much more conservative than Corollary \ref{cor:linear1} for relatively larger $\alpha$ . We use a numerical example to illustrate this. Consider $m=1$, $L=100$, $G=5$, and $c=1$. We set $\delta=0.0021<0.0071=m/\sqrt{\tilde{M}}$. Based on \eqref{alph2}, we know Corollary \ref{cor:linear1} works for $\alpha<7\times 10^{-5}$. Based on \eqref{rho_messy}, we can show Corollary \ref{cor:linear2} works for $\alpha<4.55\times 10^{-5}$. Obviously, Corollary \ref{cor:linear1} works for a larger range of $\alpha$. By numerical simulations, it is straightforward to verify that $\bar{U}_\star\rightarrow \infty$ and $\bar{\rho}\rightarrow 1$ if we apply Corollary \ref{cor:linear2} to the case where $\alpha= 4.56\times 10^{-5}$. In contrast, if we apply Corollary \ref{cor:linear1} to the case where $\alpha= 4.56\times 10^{-5}$,  we can obtain $\hat{U}_\star=4.8207$. The associated convergence rate is $1-1.74\times 10^{-5}$. Clearly, Corollary \ref{cor:linear1} gives a much more reasonable bound in this case. We have tried different problem parameters and  have observed similar trends. In general, for Case \rom{2},  Corollary  \ref{cor:linear2} is more conservative than Corollary \ref{cor:linear1} if relatively large $\alpha$ is considered. The advantage of Corollary \ref{cor:linear2} is that it is general enough to cover Cases \rom{1}, \rom{3}, and \rom{4}.

\section{Further discussion}
\label{sec:further}

\subsection{Connections to existing SGD results}

In this section, we relate the results of Theorem~\ref{thm:main} and its corollaries to existing results on standard SGD. We also discuss the effect of replacing our error model~\eqref{eq:def_ykhk0} with IID noise.

If there is no noise at all, $c=\delta=0$ and none of the approximations of Section~\ref{sec:fixed_alpha} are required to obtain an analytical bound on the iteration error. Returning to Theorem~\ref{thm:main} and Corollary~\ref{thm:cor1}, the objective to be minimized no longer depends on $\mu_k$. Examining~\eqref{eq:mainFor}, we conclude that optimality occurs as $\zeta\to 0$ ($\mu \to \infty$). This leads directly to the bound
\begin{equation}\label{eq:standardSGresult}
\ee \norm{x_{k+1}-x_\star}^2 \le (1-2m \alpha_k + \tilde M\alpha_k^2 )\ee\norm{x_k-x_\star}^2 + 2 G^2\alpha_k^2,
\end{equation}
where $\alpha_k$ is constrained such that $M_{21}\alpha_k \le 1$. The bound~\eqref{eq:standardSGresult} directly leads to existing convergence results for standard SGD. For example, we can apply the argument in Remark~\ref{rem:remark1} to obtain the following bound for standard SGD with a constant stepsize $\alpha_k=\alpha$
\begin{align}
\ee \|x_k-x_\star\|^2 \le \left(1-2m \alpha + \tilde M\alpha^2 \right)^k \ee\|x_0-x_\star\|^2 + \frac{2 G^2\alpha}{2m-\tilde M \alpha},
\end{align}
where $\alpha$ is further required to satisfy $1-2m\alpha+\tilde M \alpha^2\le 1$. For Cases \rom{1}, \rom{3}, and \rom{4}, the condition $M_{21}\alpha \le 1$ dominates, and
the valid values of $\alpha$ are documented in Corollary \ref{thm:cor1}. For Case \rom{2}, the condition $\alpha\le 2m/\tilde M$ dominates and the upper bound on $\alpha$ is $m/L^2$.

The bound recovers existing results that describe the design trade-off of standard (noiseless) SGD under a variety of conditions \cite{Nedic2001, moulines2011, needell2014}.
Case \rom{1} is a slight variant of the well-known result~\cite[Prop.~3.4]{Nedic2001}. The extra factor of $2$ in the rate and errors terms are due to the fact that \cite[Prop.~3.4]{Nedic2001} poses slightly different conditions on $g$ and $f_i$. Cases \rom{2} and \rom{3} are also well-known \cite{moulines2011, needell2014, feyzmahdavian2014}.

\begin{remark}
If the error term $e_k$ is IID noise with zero mean and bounded variance, then a slight modification to our analysis yields the bound
\begin{equation}\label{eq:standardSGIID}
\ee \norm{x_{k+1}-x_\star}^2 \le (1-2m \alpha_k + \tilde M\alpha_k^2 )\ee\norm{x_k-x_\star}^2 + (2 G^2+\sigma^2)\alpha_k^2,
\end{equation}
where $\sigma^2\ge \mathbb{E} \norm{e_k}^2$.
The detailed proof is omitted.
\end{remark}

\subsection{Adaptive stepsize via sequential minimization}

In Section~\ref{sec:fixed_alpha}, we fixed $\alpha_k=\alpha$ and derived bounds on the worst-case performance of biased SGD. In this section, we discuss the potential impacts of adopting time-varying stepsizes.
First,
we refine the bounds by optimizing over $\alpha_k$ as well. What makes this approach tractable is that in Theorem~\ref{thm:main}, the LMI~\eqref{eq:LMI_big} is also linear in $\alpha_k$. Therefore, we can easily include $\alpha_k$ as one of our optimization variables.

In fact, the development of Section~\ref{sec:sequential} carries through if we augment the set $\mathcal{T}_k$ to be the set of tuples $(\rho_k,\lambda_k,\mu_k,\nu_k,\alpha_k)$ that makes the LMI~\eqref{eq:LMI_big} feasible. We then obtain a Bellman-like equation analogous to~\eqref{yuck} that holds when we also optimize over $\alpha$ at every step. The net result is an optimization problem similar to~\eqref{eq:opt_simpler2} but that now includes $\alpha$ as a variable: 
\begin{equation}\label{eq:opt_simpler3}
\begin{aligned}
V_{k+1} \,\,=\,\, \minimize_{\alpha>0,\,\zeta > 0} \qquad &
a_k(1+\zeta^{-1}) + b_k(1+\zeta)\\
\text{subject to}\qquad
&a_k=\alpha^2 \left( c^2+2\delta^2 G^2+\tilde M\delta^2 V_k\right)\\
&b_k=\left(1-2m\alpha+\tilde M\alpha^2\right)V_k+2\alpha^2 G^2\\
&\alpha M_{21} (1+\delta^2\zeta^{-1})\le 1
\end{aligned}
\end{equation}
As we did in Section~\ref{sec:sequential}, we can show that $\ee\norm{x_k-x_\star}^2\le V_k$ for any iterates of biased SGD.
We would like to learn two things from~\eqref{eq:opt_simpler3}: how the optimal $\alpha$ changes as a function of $k$ in order to produce the fastest possible convergence rate, and whether this optimized rate is different from the rate we obtained when assuming $\alpha$ was constant in Section~\ref{sec:fixed_alpha}.

To simplify the analysis, we will restrict our attention to Case \rom{2}, where $M_{21}=0$ and $\tilde M = 2L^2$. In this case, the inequality constraint in~\eqref{eq:opt_simpler3} is satisfied for any $\alpha>0$ and $\zeta>0$, so it may be removed. Observe that the objective in~\eqref{eq:opt_simpler3} is a quadratic function of $\alpha$. 
\begin{multline}\label{eq:quad_alpha}
a_k(1+\zeta^{-1}) + b_k(1+\zeta)
= (1+\zeta) V_k
-2m (1+\zeta) V_k \alpha
+(1+\zeta^{-1})(c^2+2G^2\delta^2 \\+ \tilde M V_k \delta^2 + 2G^2\zeta + \tilde M V_k \zeta) \alpha^2
\end{multline}
This quadratic is always positive definite, and the optimal $\alpha$ is given by:
\begin{align}\label{alphaopt}
\alpha^\textup{opt}_k = \frac{m V_k \zeta }{(c^2+2 \delta ^2 G^2+\delta ^2 \tilde M V_k)+(2G^2 +\tilde M V_k)\zeta}
\end{align}
Substituting~\eqref{alphaopt} into~\eqref{eq:opt_simpler3} to eliminate $\alpha$, we obtain the optimization problem:
\begin{equation}\label{eq:opt_simpler4}
\begin{aligned}
V_{k+1} \,\,=\,\, \minimize_{\zeta > 0} \quad &
\frac{(\zeta +1) V_k \bigl(c^2+(2 G^2+\tilde M V_k) (\delta ^2+\zeta )-m^2V_k\zeta\bigr)}{c^2+ \bigl(2 G^2+\tilde M V_k\bigr)\left(\delta ^2+\zeta \right)}
\end{aligned}
\end{equation}
By taking the second derivative with respect to $\zeta$ of the objective function in~\eqref{eq:opt_simpler4}, one can check that we will have convexity as long as $(2G^2+\tilde M V_k)(1-\delta^2) \ge c^2$. In other words, we have convexity as long as the noise parameters $c$ and $\delta$ are not too large. If this bound holds for $V_k=0$, then it will hold for any $V_k>0$. So it suffices to ensure that $2G^2(1-\delta^2)\ge c^2$.

Upon careful analysis of the objective function, we note that when $\zeta=0$, we obtain $V_{k+1} = V_k$. In order to obtain a decrease for some $\zeta>0$, we require a negative derivative at $\zeta=0$. This amounts to the condition: $c^2 + (2G^2+\tilde M V_k)\delta^2 < m^2 V_k$. As $V_k$ gets smaller, this condition will eventually be violated. Specifically, the condition holds whenever     
$m^2-\tilde M \delta^2>0$ and
\[
V_k > \frac{c^2+2\delta^2 G^2}{m^2-\tilde M \delta^2}
\]
Note that this is the same limit as was observed in the constant-$\alpha$ limits $\hat U_\star$ and $\tilde U_\star$ when $\alpha \to 0$ in~\eqref{e:ustar} and \eqref{utilde}, respectively.  This is to be expected;  the biased gradient information introduces an uncontrollable bias (which is quantified as $\frac{c^2+2\delta^2 G^2}{m^2-\tilde{M}\delta^2}$)  into the final optimization result, and this 
can not be overcome by any stepsize rules.
Notice that we have not ruled out the possibility that $V_k$ suddenly jumps below $\frac{c^2+2\delta^2 G^2}{m^2-\tilde M \delta^2}$ at some $k$ and then stays unchanged after that. We will make a formal argument to rule out this possibility in the next lemma. 
Moreover, the question remains as to whether this minimal error can be achieved \textit{faster} by varying $\alpha_k$ in an optimal manner. We describe the final nonlinear recursion in the next lemma.

\begin{lemma}\label{lem:complicated_nonlinear}
Consider biased SGD~\eqref{eq:SGerror} with $g\in\svx(m,\infty)$ for some $m>0$, and let $x_\star$ be the unique global minimizer of $g$. Suppose Case \rom{2} holds and $(M, G)$ are the associated values from Table \ref{tab:Mmat}. Further assume $2G^2(1-\delta^2) \ge c^2$ and $V_0> \frac{c^2+2\delta^2 G^2}{m^2-\tilde M \delta^2}=V_\star$.

\begin{enumerate}
\item The sequential optimization problem \eqref{eq:opt_simpler4} can be solved using the following nonlinear recursion
\begin{multline}\label{ugly_nonlinear}
  V_{k+1} =\frac{V_k}{(2 G^2+\tilde M V_k)^2}\\
  \times \left(
\sqrt{(2 G^2+(\tilde M-m^2) V_k) \bigl((2 G^2+\tilde MV_k )(1-\delta ^2)-c^2\bigr)}\right. \\
+\left.\sqrt{m^2 V_k (c^2+\delta ^2 (2 G^2+\tilde M V_k))}
\right)^2
\end{multline}
and $V_k$ satisfies $V_k> V_\star$ for all $k$.

\item Suppose $ \hat U_0 = V_0 \ge \ee \norm{x_0-x_\star}^2$ (all recurrences are initialized the same way), then $\{V_k\}_{k\ge 0}$ provides an upper bound to the iterate error satisfying $\ee \norm{x_k-x_\star}^2 \le V_k \le \hat U_k$. 

\item The sequence $\{V_k\}_{k\ge 0}$ converges to $V_\star$:
\[
\lim_{k\to \infty} V_k =V_\star= \frac{c^2+2\delta^2 G^2}{m^2-\tilde M \delta^2}
\]
\end{enumerate}

\end{lemma}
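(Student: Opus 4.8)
\textbf{Proof proposal for Lemma~\ref{lem:complicated_nonlinear}.}

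The plan is to treat the three claims in sequence, since each builds on the previous one. For Part~1, I would start from the convex single-variable problem~\eqref{eq:opt_simpler4} (convexity being guaranteed by the assumption $2G^2(1-\delta^2)\ge c^2$, as already noted before the lemma statement). Since $M_{21}=0$ in Case~\rom{2}, there is no inequality constraint on $\zeta$, so the minimizer is found by setting the derivative of the objective to zero. I would write the objective as a ratio $\frac{(\zeta+1)V_k\,(A - B\zeta)}{C + D\zeta}$ where $A = c^2 + (2G^2+\tilde M V_k)\delta^2$, $B = m^2 V_k$, $C = c^2 + (2G^2 + \tilde M V_k)\delta^2$ (so $A=C$ here), $D = 2G^2 + \tilde M V_k$, and differentiate. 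Clearing denominators gives a quadratic in $\zeta$; solving it and substituting back into the objective should, after algebraic simplification, produce the closed form~\eqref{ugly_nonlinear} with the sum-of-square-roots structure. The positivity of the two radicands follows from $V_k > V_\star$ (which gives $2G^2 + (\tilde M - m^2)V_k < $ something controllable; more precisely $m^2 V_k > c^2 + \delta^2(2G^2+\tilde M V_k)$ is exactly the $V_k > V_\star$ condition rearranged) together with $2G^2(1-\delta^2)\ge c^2$. To close Part~1 I need the invariance $V_k > V_\star \implies V_{k+1} > V_\star$; I would verify this by showing that $V_\star$ is a fixed point of the recursion~\eqref{ugly_nonlinear} and that the right-hand side, as a function of $V_k$, is monotonically increasing on $(V_\star,\infty)$ — then $V_k > V_\star$ propagates forward by induction, using $V_0 > V_\star$ as the base case. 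Monotonicity can be checked by differentiating~\eqref{ugly_nonlinear} in $V_k$, or more cheaply by noting the recursion arises from minimizing~\eqref{eq:opt_simpler4}, whose objective is monotone in $V_k$ pointwise in $\zeta$, so the min is monotone in $V_k$ as well.

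For Part~2, the chain $\ee\norm{x_k - x_\star}^2 \le V_k \le \hat U_k$ is essentially bookkeeping. The left inequality is the induction already sketched in Section~\ref{sec:sequential}: $V_k$ is defined exactly so that the Bellman-type recursion~\eqref{eq:opt_simpler3} (here specialized to Case~\rom{2}) upper-bounds the error, using Theorem~\ref{thm:main} at each step with the feasibility of the optimal $(\alpha^\textup{opt}_k,\zeta)$ in~\eqref{eq:LMI_big} guaranteed by Corollary~\ref{thm:cor1}. The right inequality $V_k \le \hat U_k$ holds because $\hat U_k$ solves the same optimization~\eqref{eq:opt_simpler2} but with $\alpha$ frozen at the constant value $\alpha$, whereas $V_k$ optimizes over $\alpha>0$ as well — so $V_k$ is a minimum over a strictly larger feasible set, hence no larger; I would make this precise by an induction on $k$ showing $V_k \le \hat U_k$ implies $V_{k+1} \le \hat U_{k+1}$ (feeding the constant-$\alpha$ choice into the $V$-recursion can only increase the value, and monotonicity of the recursion in its $V_k$-argument from Part~1 handles the initial-condition comparison), with equal initialization $V_0 = \hat U_0$ as the base case.

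Part~3 is the convergence $V_k \to V_\star$. Since $\{V_k\}$ is bounded below by $V_\star$, the natural approach is to show it is monotonically decreasing and then identify the limit. Monotone decrease: from~\eqref{eq:opt_simpler4}, at $\zeta = 0$ the objective equals $V_k$, and — as argued in the text preceding the lemma — whenever $V_k > V_\star$ the derivative at $\zeta=0$ is strictly negative, so the minimizing $\zeta$ is strictly positive and achieves a value strictly below $V_k$; hence $V_{k+1} < V_k$. A decreasing sequence bounded below converges, say to $V_\infty \ge V_\star$. Taking limits in the continuous recursion~\eqref{ugly_nonlinear} shows $V_\infty$ is a fixed point; it remains to argue $V_\star$ is the \emph{only} fixed point in $[V_\star,\infty)$ — which follows because for any $V > V_\star$ the strict-decrease argument above shows the recursion strictly decreases it, so it cannot be fixed. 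Therefore $V_\infty = V_\star$.

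The main obstacle I anticipate is the algebra in Part~1: verifying that the first-order condition for~\eqref{eq:opt_simpler4} really collapses to the clean form~\eqref{ugly_nonlinear}, and simultaneously pinning down the exact inequalities ($V_k > V_\star$ and $2G^2(1-\delta^2)\ge c^2$) needed for both radicands to be nonnegative and for $\alpha^\textup{opt}_k$ in~\eqref{alphaopt} to be positive. Everything else is either an induction of a type already carried out in Section~\ref{sec:sequential} or a standard monotone-convergence argument; the risk is entirely in getting the substitution and simplification right without sign errors.
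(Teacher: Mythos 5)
Your proposal is essentially correct, but it takes a different route from the paper at two points, one cosmetic and one substantive. For Part~1 the paper never differentiates the objective of~\eqref{eq:opt_simpler4}: it changes variables to $Z_k=\frac{(2G^2+\tilde M V_k)(\delta^2+\zeta)+c^2}{(2G^2+\tilde M V_k)(1-\delta^2)-c^2}$, which recasts the problem as $\min_{Z>\cdot}\,A_k(1+Z^{-1})+B_k(1+Z)$ with $A_k,B_k\ge 0$ (this is where $2G^2(1-\delta^2)\ge c^2$ enters), so Lemma~\ref{lem:recursiveBound} immediately gives the value $(\sqrt{A_k}+\sqrt{B_k})^2$, i.e.~\eqref{ugly_nonlinear}, with optimal $Z_k=\sqrt{A_k/B_k}$ and the feasibility check $\zeta_k>0\iff V_k>V_\star$; no quadratic in $\zeta$ is ever solved. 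Your direct first-order-condition route can be made to work, but note a slip in your setup: the numerator factor of the objective is $c^2+(2G^2+\tilde M V_k)(\delta^2+\zeta)-m^2V_k\zeta$, so the coefficient of $\zeta$ there is $(2G^2+\tilde M V_k)-m^2V_k$, not $-m^2V_k$; with your ``$A-B\zeta$'' form the algebra would not reproduce~\eqref{ugly_nonlinear}. Also, your ``cheaper'' monotonicity argument (minimum of a family of objectives each nondecreasing in $V_k$) only yields that the recursion map is nondecreasing, hence $V_k\ge V_\star$; the paper obtains the strict inequality $V_k>V_\star$ by checking that $A_k$ is strictly increasing in $V_k$ (so the map is strictly increasing), so you would need your fallback of differentiating~\eqref{ugly_nonlinear} or the $A_k,B_k$ check. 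Part~2 matches the paper (it simply invokes the argument of Section~\ref{sec:sequential}). Part~3 is where you genuinely diverge: you prove strict monotone decrease (negative derivative at $\zeta=0$ whenever $V_k>V_\star$), extract a limit $V_\infty\ge V_\star$, and identify it as the unique fixed point in $[V_\star,\infty)$ by continuity; the paper instead squeezes $V_\star\le V_k\le \hat U_k$ using Item~2 and lets the constant stepsize $\alpha\to 0$, so that $\hat U_\star$, and hence $\limsup_k V_k$, can be driven arbitrarily close to $V_\star$. Your argument is self-contained and does not rely on Item~2 or on the constant-stepsize analysis of Corollary~\ref{cor:linear1}, and it additionally shows the sequence is monotone; the paper's is shorter because that machinery is already available. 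Both are valid proofs of the limit.
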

\begin{proof}
See Appendix \ref{sec:proofLemma4}.
\end{proof}

To learn more about the rate of convergence, we can once again use a Taylor series approximation. Specializing to Case \rom{2} (where $\tilde M >0$), we can consider two cases. When $V_k$ is large, perform a Taylor expansion of~\eqref{ugly_nonlinear} about $V_k=\infty$ and obtain:
\[
V_{k+1} \approx \left(\tfrac{m\delta + \sqrt{(\tilde M-m^2)(1-\delta^2)}}{\tilde M}\right)V_k + O(1)
\]

In other words, we obtain linear convergence. When $V_k$ is close to $V_\star$, the behavior changes. To see this, perform a Taylor expansion of~\eqref{ugly_nonlinear} about $V_k=V_\star$ and obtain:
\begin{align}\label{rec_v}
V_{k+1} \approx V_k - \frac{(m^2-\tilde M \delta^2)^3}{4m^2\bigl( c^2(\tilde M-m^2) + 2G^2m^2(1-\delta^2)\bigr)} \left( V_k - V_\star \right)^2 + O((V_k-V_\star)^3)
\end{align}

We will ignore the higher-order terms, and apply the next lemma to show that the 
above recursion roughly converges at a $O(1/k)$ rate.

\begin{lemma}\label{lem:quadbound}
Consider the recurrence relation
\begin{equation}\label{q:1}
v_{k+1} = v_k - \eta v_k^2\qquad\text{for }k=0,1,\dots
\end{equation}
where $v_0>0$ and $0 < \eta < v_0^{-1}$. Then the iterates satisfy the following bound for all $k\ge 0$.
\begin{equation}\label{q:2}
v_k \le \frac{1}{\eta k + v_0^{-1}}
\end{equation}
\end{lemma}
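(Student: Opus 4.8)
The plan is to prove the bound~\eqref{q:2} by induction on $k$, which is the natural approach for a recursively-defined sequence. First I would verify the base case: at $k=0$, the claimed bound reads $v_0 \le \frac{1}{v_0^{-1}} = v_0$, which holds with equality, so there is nothing to check. Before doing the inductive step, I would also establish the auxiliary fact that the iterates stay positive and nonincreasing, namely $0 < v_{k+1} \le v_k$ for all $k$; this follows because $v_{k+1} = v_k(1-\eta v_k)$, and as long as $0 < v_k \le v_0 < \eta^{-1}$ we have $0 < 1 - \eta v_k < 1$, so positivity and monotonicity propagate by induction simultaneously. In particular $v_k \le v_0 < \eta^{-1}$ for all $k$, which keeps everything in the regime where the recursion is well-behaved.

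For the inductive step, suppose $v_k \le \frac{1}{\eta k + v_0^{-1}}$; write $w_k := \eta k + v_0^{-1}$ so the hypothesis is $v_k \le 1/w_k$ and the goal is $v_{k+1} \le 1/w_{k+1}$ with $w_{k+1} = w_k + \eta$. The cleanest route is to pass to reciprocals. From $v_{k+1} = v_k - \eta v_k^2 = v_k(1-\eta v_k)$ and positivity of $v_{k+1}$, we get
\begin{equation}\label{q:recip}
\frac{1}{v_{k+1}} = \frac{1}{v_k(1-\eta v_k)} = \frac{1}{v_k} \cdot \frac{1}{1-\eta v_k} \ge \frac{1}{v_k}(1 + \eta v_k) = \frac{1}{v_k} + \eta,
\end{equation}
where the inequality uses $\frac{1}{1-x} \ge 1+x$ for $x \in [0,1)$ (applied with $x = \eta v_k \in [0,1)$). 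Hence $\frac{1}{v_{k+1}} \ge \frac{1}{v_k} + \eta \ge w_k + \eta = w_{k+1}$, where the second inequality is the induction hypothesis $\frac{1}{v_k} \ge w_k$. Taking reciprocals (all quantities positive) yields $v_{k+1} \le \frac{1}{w_{k+1}} = \frac{1}{\eta(k+1) + v_0^{-1}}$, completing the induction.

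The argument is short and the only mild subtlety — the main thing to get right — is making sure the division by $1-\eta v_k$ in~\eqref{q:recip} is legitimate, i.e. that $1 - \eta v_k > 0$ strictly (not merely $\ge 0$), and that $v_{k+1} > 0$ so reciprocals are defined; both are handled by the preliminary monotonicity/positivity claim, which is why I would prove that first. There are no real obstacles here; this is a standard telescoping-of-reciprocals estimate, and everything reduces to the elementary inequality $(1-x)^{-1} \ge 1+x$ on $[0,1)$.
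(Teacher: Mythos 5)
Your proof is correct and follows essentially the same route as the paper: both pass to reciprocals of the iterates and derive the telescoping inequality $\tfrac{1}{v_{k+1}} \ge \tfrac{1}{v_k} + \eta$ (the paper via the partial-fraction identity $\tfrac{1}{x-x^2} = \tfrac{1}{x} + \tfrac{1}{1-x}$ with $\tfrac{1}{1-x}\ge 1$, you via $(1-x)^{-1}\ge 1+x$), then sum and invert. The only difference is presentational: you make the positivity/monotonicity of the iterates and the induction explicit, which the paper simply asserts.
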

\begin{proof}
The recurrence~\eqref{q:1} is equivalent to
$
\eta v_{k+1} = \eta v_k -(\eta  v_k)^2
$
with $0 < \eta v_0 < 1$. Clearly, the sequence $\{\eta v_k\}_{k\ge 0}$ is monotonically decreasing to zero. To bound the iterates, invert the recurrence:
\[
\frac{1}{\eta v_{k+1}} = \frac{1}{\eta v_k-(\eta v_k)^2} = \frac{1}{\eta v_k} + \frac{1}{1-\eta v_k} \ge \frac{1}{\eta v_k}+1
\]
Recursing the above inequality, we obtain:
$
\frac{1}{\eta v_{k}} \ge \frac{1}{\eta v_0} + k
$.
Inverting this inequality yields~\eqref{q:2}, as required.
\end{proof}

Applying Lemma~\ref{lem:quadbound} to the sequence $v_k = V_k-V_\star$ defined in~\eqref{rec_v}, we deduce that when $V_k$ is close to its optimal value of $V_\star$, we have:
\begin{equation}\label{vklinear}
V_k \sim V_\star + \frac{1}{\eta k + (V_0-V_\star)^{-1}}
\quad\text{with: }
\eta = \frac{(m^2-\tilde M \delta^2)^3}{4m^2\bigl( c^2(\tilde M-m^2) + 2G^2m^2(1-\delta^2)\bigr)}
\end{equation}
We can also examine how $\alpha_k$ changes in this optimal recursive iteration by taking~\eqref{alphaopt} and substituting the optimal~$\zeta$ found in the optimization of Lemma~\ref{lem:complicated_nonlinear}. The result is messy, but a Taylor expansion about $V_k=V_\star$ reveals that
\[
\alpha_k^\textup{opt} \approx
\frac{(m^2-\tilde M \delta^2)^2}{2m\bigl( c^2(\tilde M-m^2) + 2G^2m^2(1-\delta^2)\bigr)}(V_k-V_\star) + O((V_k-V_\star)^2).
\]
So when $V_k$ is close to $V_\star$, we should be decreasing $\alpha_k$ to zero at a rate of $O(1/k)$ so that it mirrors the rate at which $V_k-V_\star$ goes to zero in~\eqref{vklinear}.

 Finally, we want to mention that calculating $\alpha_k^\textup{opt}$ requires one to know the problem parameters $(m, \tilde{M}, \delta, c, G, V_0)$ in advance. This restricts the applicability of such adaptive stepsize rules for practical problems. Nevertheless, our results in this section bring new theoretical insights for the potential impacts of time-varying stepsizes on the performance of biased SGD. In summary, adopting an optimized time-varying stepsize still roughly yields a rate of $O(1/k)$, which is consistent with the sublinear convergence rate of standard SGD with diminishing stepsize.  It is possible that the well-known lower complexity bounds for standard SGD in \cite{agarwal2012information} can be extended to the inexact case, although a formal treatment is beyond the scope of this paper.

\section*{Appendix}
\appendix
\section{Proof of Theorem \ref{thm:main}} \label{proof:lemma2}

First notice that since $i_k$ is uniformly distributed on $\{1,\dots,n\}$ and $x_k$ and $i_k$ are independent, we have: 
\[
\ee \bigl( u_k \,\big|\, x_k \bigr) = 
\ee \bigl( \nabla f_{i_k}(x_k) \,\big|\, x_k \bigr) = \frac{1}{n}\sum_{i=1}^n \nabla f_i(x_k) = \nabla g(x_k)
\]
Consequently, we have:
\begin{align}
\label{eq:SGR2}
\ee \left(\bmtx x_k-x_\star \\ u_k \emtx^\tp\!
\addtolength{\arraycolsep}{-1pt}
\bmtx -2mI_p &  I_p \\ I_p & 0_p\emtx
\bmtx x_k-x_\star \\ u_k \emtx\middle|\  x_k\right) 
= \bmtx x_k-x_\star \\ \nabla g(x_k) \emtx^\tp\!
\addtolength{\arraycolsep}{-1pt}
\bmtx  -2m I_p & I_p\\ I_p & 0_p\emtx
\bmtx x_k-x_\star \\ \nabla  g(x_k) \emtx \ge 0
\end{align}
where the inequality in~\eqref{eq:SGR2} follows from the definition of $g \in \svx(m,\infty)$. 

Next we prove \eqref{eq:SQC3}, let's start with Case~\rom{1}, the boundedness constraint $\|\nabla f_i(x_k)\|\le \beta$ implies that $\norm{u_k} \le \beta$ for all $k$. Rewrite as a quadratic form to obtain:
\begin{align}
\label{eq:C1pr}
\bmtx x_k-x_\star\\u_k \emtx^\tp
\bmtx 0_p & 0_p \\ 0_p & -I_p \emtx
\bmtx x_k-x_\star\\u_k \emtx \ge -\beta^2
\end{align}
 The boundedness constraint $\|\nabla f_i(x_k)-mx_k\|\le \beta$ implies that:
\begin{align*}
\norm{u_k-m(x_k-x_\star)}^2
&\le \norm{(u_k-mx_k)+mx_\star}^2 + \norm{(u_k-mx_k)-mx_\star}^2\\
&=2\norm{u_k-mx_k}^2 + 2m^2\norm{x_\star}^2 \\
&\le 2\beta^2 + 2m^2\norm{x_\star}^2
\end{align*}
As in the proof of \eqref{eq:C1pr}, rewrite the above inequality as a quadratic form and we obtain the second row of Table~\ref{tab:Mmat}.

To prove the three remaining cases, we begin by showing that an inequality of the following form holds for each $f_i$:
\begin{align}
\label{eq:fiCons}
\bmtx x_k-x_\star \\ \nabla f_i(x_k)-\nabla f_i(x_\star) \emtx^\tp
\bmtx M_{11}I_p &  M_{12}I_p \\ M_{21}I_p & -2I_p\emtx
\bmtx x_k-x_\star \\ \nabla f_i(x_k)-\nabla f_i(x_\star) \emtx \ge 0
\end{align}
The verification for \eqref{eq:fiCons} follows directly from the definitions of $L$-smoothness and convexity. In the smooth case (Definition~\ref{def:smooth}), for example, $\|\nabla f_i(x_k)-\nabla f_i(x_\star)\|\le L\|x_k-x_\star\|$. So~\eqref{eq:fiCons} holds with $M_{11}=2L^2$, $M_{12}=M_{21}=0$. The cases for $\cvx(0,L)$ and $\cvx(m,L)$ follow directly from Definition~\ref{def:cvx}.
In Table~\ref{tab:Mmat}, we always have $M_{22}=-1$. Therefore,
\begin{multline}
\label{eq:SGfiConkey0}
\ee \left(\bmtx x_k-x_\star \\ u_k \emtx^\tp
\bmtx M_{11}I_p &  M_{12}I_p \\ M_{21}I_p & M_{22}I_p\emtx
\bmtx x_k-x_\star \\ u_k \emtx\,\,\middle|\ \, x_k\right) \\
=\frac{1}{n} \sum_{i=1}^n \bmtx x_k-x_\star \\ \nabla  f_i(x_k)\emtx^\tp \bmtx M_{11}I_p &  M_{12}I_p \\ M_{21}I_p & 0_p\emtx
\bmtx x_k-x_\star \\ \nabla  f_i(x_k)\emtx- \frac{1}{n} \sum_{i=1}^n \|\nabla f_i(x_k)\|^2
\end{multline}
Since $\frac{1}{n}\sum_{i=1}^n \nabla f_i(x_\star) = \nabla g(x_\star) = 0$, the first term on the right side of \eqref{eq:SGfiConkey0} is equal to
\begin{align*}
\frac{1}{n} \sum_{i=1}^n \bmtx x_k-x_\star \\ \nabla  f_i(x_k)-\nabla f_i(x_\star)\emtx \bmtx M_{11}I_p &  M_{12}I_p \\ M_{21}I_p & 0_p\emtx
\bmtx x_k-x_\star \\ \nabla  f_i(x_k)-\nabla f_i(x_\star) \emtx
\end{align*}
Based on the constraint condition~\eqref{eq:fiCons}, we know that the above term is greater than or equal to $\frac{2}{n} \sum_{i=1}^n \|\nabla  f_i(x_k)-\nabla f_i(x_\star) \|^2$. Substituting this fact back into \eqref{eq:SGfiConkey0} leads to the inequality:
\begin{align}
\label{eq:SGfiConkey1} \notag
\hspace{1cm}&\hspace{-1cm} \ee \left(\bmtx x_k-x_\star \\ u_k \emtx^\tp
\bmtx M_{11}I_p &  M_{12}I_p \\ M_{21}I_p & M_{22}I_p\emtx
\bmtx x_k-x_\star \\ u_k \emtx\,\,\middle|\ \, x_k\right) \\ \notag
&\ge \frac{1}{n} \sum_{i=1}^n \left(2\|\nabla  f_i(x_k)-\nabla f_i(x_\star) \|^2- \|\nabla f_i(x_k)\|^2\right) \\ \notag
&= \frac{1}{n} \sum_{i=1}^n \left( \|\nabla  f_i(x_k)-2\nabla f_i(x_\star) \|^2- 2\|\nabla f_i(x_\star)\|^2\right)\\
&\ge -\frac{2}{n} \sum_{i=1}^n \|\nabla f_i(x_\star)\|^2
\end{align}
Taking the expectation of both sides, we arrive at \eqref{eq:SQC3}, as desired.
Now we are ready to prove our main theorem.
By Schur complement, \eqref{eq:LMI_big} is equivalent to \eqref{eq:threebythree}, which can be further rewritten as
\begin{align}
\label{eq:keystep}
\left(\bmtx 1-\rho_k^2 & -\alpha_k & -\alpha_k\\[0.5mm]-\alpha_k & \alpha_k^2 & \alpha_k^2 \\[0.5mm]-\alpha_k & \alpha_k^2 & \alpha_k^2 \emtx
+\nu_k\!\bmtx -2m & 1 & 0\\1 & 0 & 0 \\0 & 0 &0 \emtx
+\lambda_k\!\bmtx M_{11} & M_{12} & 0\\M_{21} & M_{22} & 0 \\0 & 0 & 0 \emtx
+\mu_k\!\bmtx 0 & 0 & 0\\0 & \delta^2 & 0 \\0 & 0 &-1 \emtx\right)\otimes I_p\preceq 0
\end{align}
Since $x_{k+1}-x_\star=x_k-x_\star-\alpha_k(u_k+e_k)$, we have
\begin{align}
\bmtx x_k-x_\star \\ u_k \\ e_k \emtx^\tp \left(\bmtx 1 & -\alpha_k & -\alpha_k\\-\alpha_k & \alpha_k^2 & \alpha_k^2 \\-\alpha_k & \alpha_k^2 &\alpha_k^2 \emtx\otimes I_p\right) \bmtx x_k-x_\star \\ u_k \\ e_k \emtx=\|x_{k+1}-x_\star\|^2
\end{align}

Now we can left and right multiply \eqref{eq:keystep}  by $[(x_k-x_\star)^\tp,  u_k^\tp, e_k^\tp]$ and $[(x_k-x_\star)^\tp,  u_k^\tp, e_k^\tp]^\tp$, and apply the inequalities \eqref{eq:def_ykhk0}, \eqref{eq:SGR2}, and \eqref{eq:SQC3} to get
the desired conclusion.
\qed

\section{Proof of Lemma \ref{lem:complicated_nonlinear}}
\label{sec:proofLemma4}
We use an induction argument to prove Item 1. For simplicity, we denote \eqref{eq:opt_simpler4} as $V_{k+1}=h(V_k)$. Suppose we have $V_k=h(V_{k-1})$ and $V_{k-1}>V_\star$. We are going to show $V_{k+1}=h(V_k)$ and $V_k>V_\star$.
We can rewrite \eqref{eq:opt_simpler4} as
\begin{equation}
\label{eq:Vformu1}
V_{k+1}=\,\, \minimize_{\zeta > 0} \quad A_k(1+Z_k^{-1}) + B_k(1+Z_k)
\end{equation}
where  $A_k$, $B_k$, and $Z_k$ are defined as
\begin{align*}
A_k&=\frac{m^2 V_k^2 \left(c^2+(2G^2+\tilde M V_k)\delta^2\right)}{(2G^2+\tilde M V_k)^2}\\
B_k&=\frac{(2G^2 V_k+(\tilde M-m^2) V_k^2)((2G^2+\tilde M V_k)(1-\delta^2)-c^2)}{(2G^2+\tilde M V_k)^2}\\
Z_k&=\frac{\bigl(2 G^2+\tilde M V_k\bigr)(\delta ^2+\zeta_k )+c^2}{(2G^2+\tilde M V_k)(1-\delta^2)-c^2}
\end{align*}
Note that $A_k \ge 0$ and $B_k \ge 0$ due to the condition $2G^2(1-\delta^2) \ge c^2$. The objective in~\eqref{eq:Vformu1} therefore has a form very similar to the objective in~\eqref{eq:opt_simpler2}. Applying Lemma~\ref{lem:recursiveBound}, we deduce that $V_{k+1} =(\sqrt{A_k}+\sqrt{B_k})^2$, which is the same as~\eqref{ugly_nonlinear}.
The associated $Z_k^\textup{opt}$ is $\sqrt{\tfrac{A_k}{B_k}}$. To ensure this is a feasible choice, it remains to check that the associated $\zeta_k^\textup{opt} > 0$ as well. Via algebraic manipulations, one can show that $\zeta_k > 0$ is equivalent to $V_k > V_\star$. 
We can also verify $A_k$ is a monotonically increasing function of $V_k$, and $B_k$ is a monotonically nondecreasing function of $V_k$. Hence $h$ is a monotonically increasing function. Also notice $V_\star$ is a fixed point of \eqref{ugly_nonlinear}. 
Therefore, if we assume $V_k=h(V_{k-1})$ and $V_{k-1}>V_\star$, we have $V_k=h(V_{k-1})>h(V_\star)=V_\star$. Hence we guarantee $\zeta_k>0$ and $V_{k+1}=h(V_k)$. By similar arguments, one can verify $V_1=h(V_0)$. And it is assumed that $V_0>V_\star$. This completes the induction argument.  

Item 2 follows from a similar argument to the one used in Section~\ref{sec:sequential}. Finally, Item 3 can be proven by choosing a sufficiently small constant stepsize $\alpha$ to make $\hat U_k$ arbitrarily close to $V_\star$. Since $V_\star \le V_k \le \hat U_k$, we conclude that $\lim_{k\to\infty} V_k = V_\star$, as required. \qed


\bibliographystyle{abbrv}
\bibliography{IQCandSOS}

\end{document}